\font\Bbb=msbm10
\def\R{\hbox{\Bbb R}}
\def\address#1
\def\expandafter\@aabuffer\expandafter
\newtheorem{theorem}{Theorem}[section]
\newtheorem{lemma}[theorem]{Lemma}
\newtheorem{proposition}[theorem]{Proposition}
\newtheorem{corollary}[theorem]{Corollary}
\newtheorem{definition}[theorem]{Definition}
\newtheorem{remark}[theorem]{Remark}
\newtheorem{example}[theorem]{Example}
\def\prbox
\begin{document}


\title{Doodles on Surfaces}

\author{Andrew Bartholomew \\ 
School of Mathematical Sciences, University of Sussex\\
Falmer, Brighton, BN1 9RH, England\\
e-mail address: andrewb@layer8.co.uk \\ 
\\
Roger Fenn \\ 
School of Mathematical Sciences, University of Sussex\\
Falmer, Brighton, BN1 9RH, England\\
e-mail address: rogerf@sussex.ac.uk \\ 
\\ 
Naoko Kamada \\ 
Graduate School of Natural Sciences, Nagoya City University\\
Nagoya, Aichi 467-8501, Japan\\
e-mail address: kamada@nsc.nagoya-cu.ac.jp \\ 
\\ 
Seiichi Kamada \\ 
Department of Mathematics, Osaka City University\\
Osaka, Osaka 558-8585, Japan\\
e-mail address: skamada@sci.osaka-cu.ac.jp
}

\maketitle

\begin{abstract}
Doodles were introduced in \cite{FT} but were restricted to embedded circles in the $2$-sphere. Khovanov, \cite{MK}, extended the idea to immersed circles in the $2$-sphere. In this paper we further extend the range of doodles to any closed oriented surface.  
Uniqueness of minimal representatives is proved, and various example of doodles are given with their minimal representatives. 
We also introduce the notion of virtual doodles, and show that there is a natural one-to-one correspondence between doodles on surfaces and virtual doodles on the plane.  
\end{abstract}

{\bf keywords:} {doodles, virtual doodles, minimal diagrams, immersed circles} 

{\bf Mathematics Subject Classification 2010:} {57M25, 57M27}

\section{Introduction}

Doodles were first introduced by the second author and Taylor in \cite{FT}. The original definition of a doodle was a collection of embedded circles in the $2$-sphere $S^2$ with no triple or higher intersection points.
Khovanov, \cite{MK}, extended the idea to allow each component to be an immersed circle in $S^2$. 
Further references on plane curves are \cite{A1} and \cite{A2}. 

In this paper, we further extend the range of doodles to immersed circles in closed oriented surfaces of any genus.  
Doodles on surfaces can be regarded as equivalence classes of generically immersed circles in closed oriented surfaces, called regular representatives or diagrams, 
under an equivalence relation.  The equivalence relation is generated by introducing/removing locally a monogon or bigon and 
surgeries on ambient surfaces avoiding the diagrams (Section~\ref{sect:defs}). 

A doodle diagram on a surface is called minimal if the interior of every region is 
simply connected and there are no monogons and bigons.   
In Section~\ref{sect:uniqueness}, we prove uniqueneness of minimal diagrams, which means that 
for each doodle, there is a unique minimal diagram representing the doodle (Theorem~\ref{thm:minimal}).  
It is analogous to Kuperberg's theorem, \cite{GK}, in virtual knot theory.  

To show this theorem, we generalise Newman's simplification procedure, \cite{N}. 
The main result of Section~\ref{sect:uniqueness} is Theorem~\ref{thm:proofreductiongraph}:  
The graph $\cal D$ of doodle diagrams with levels is a \lq proof reduction graph\rq.  
Theorem~\ref{thm:proofreductiongraph} implies that the uniqueness theorem (Theorem~\ref{thm:minimal}) and 
a theorem on characterization of minimal diagrams (Theorem~\ref{thm:characterize}), which claims that a diagram is minimal if and only if it is a 
 diagram with minimum crossing number and the ambient surface has minimum genus and  maximum component number.  

In Section~\ref{sect:example}, we observe minimal doodle diagrams and provide examples of planar doodles by giving sequences of minimal planar diagrams.  

The notion of a virtual doodle is introduced in Section~\ref{sect:virtual}, and it is proved in Section~\ref{sect:virtualsame} that 
there is a natural one-to-one correspondence between virtual doodles and doodles on surfaces (Theorem~\ref{thm:bijection}).  This correspondence is analogous to the correspondence between virtual links and stable equivalence classes of link diagrams on surfaces in \cite{CKS, KK}.

Section~\ref{sect:minimalvirtual} is devoted to showing examples of minimal virtual doodle diagrams and an observation on a relationship between the genera of doodles and virtual crossings of virtual doodles.   


It is shown in \cite{Fe}
that planar doodles induce commutator identities in the free group, \cite{H}.
Commutator identities related to doodles on surfaces will be discussed in a later paper.
For a unified treatment of generalized knot theories see \cite{F}.  

The authors would like to express their thanks to Victoria Lebed for pointing out that there was a duplication of the list of 
minimal virtual doodles with $4$ crossings in the previous version of this paper posted on arXiv:1612.08473v1 (see Remark~\ref{Lebed}).    
This work was supported by JSPS KAKENHI Grant Numbers 26287013 and 15K04879.

\section{Definitions} \label{sect:defs}

A {\bf doodle} is represented by a map  $f: \coprod_i S^1_i\to \Sigma $
from $n$ disjoint circles to a closed oriented surface $\Sigma$ so that
$|f^{-1}f(x)|<3\hbox{ for all } x\in \coprod_i S^1_i $.
That is, no triple or higher multiple points are created.  
To avoid \lq\lq wild\rq\rq doodles, we further assume that $f$ is a smooth map with a (topological) normal bundle. 

The map $f$ restricted to any one circle is called a {\bf component}. 

Two representatives are said to be {\bf equivalent} if they are equivalent under the equivalence generated by 1. 2. and  3. defined as follows. 

\begin{enumerate}
\item Homeomorphic equivalence,
\item Homotopy through doodle representatives,
\item Surface surgery disjoint from the diagram.
\end{enumerate}

The equivalence class is called a {\bf doodle}, or a {\bf doodle on a surface}. However, as is the usual custom, we will often not distinguish between a doodle and its representative.


A doodle representative is {\bf regular} if it is an immersion whose multiple points 
are a finite number of transverse crossings. Clearly any doodle can be regularly represented. 
The image of a regular representative is called a {\bf diagram} on the surface.  
A pair $(\Sigma, D)$ of a diagram $D$ on a surface $\Sigma$ is also called a diagram on a surface. 

A representative $f: \coprod_i S^1_i\to \Sigma $ or a diagram is called {\bf admissible} if each connected component of the surface $\Sigma$ intersects with the image of $f$.  Throughout this paper we always assume that representatives and diagrams are admissible unless otherwise stated.  

A doodle is called {\bf planar} if there is a representative on the $2$-sphere. Although such a representative is often depicted on the plane, one should consider it on the $2$-sphere.

Figure~\ref{fig01Poppyfig02Borromean} shows diagrams of two planar doodles. The first, called the {\bf poppy}, has one component and the other with 3 components is called the {\bf Borromean} doodle. 

    \begin{figure}[h]
    \centerline{\epsfig{file=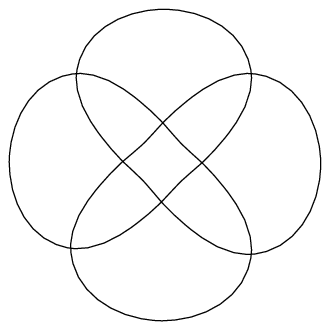, height=2.5cm} 
    \qquad \qquad 
    \epsfig{file=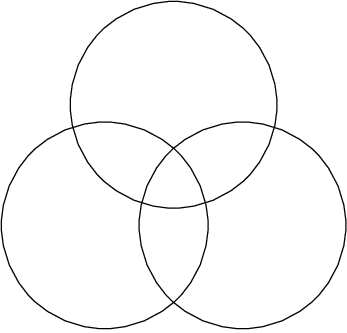, height=2.5cm}}
    \vspace*{8pt}
    \caption{The Poppy and the Borromean Doodle}\label{fig01Poppyfig02Borromean}
    \end{figure}

These are the first members of an infinite family of planar doodles considered later in the paper.

The {\bf Hopf doodle} is represented by the longitude and meridian of a torus. That is $S^1\times *\cup  *\times S^1 \subset S^1\times S^1$.

1. Homeomorphic equivalence means that the doodles are \lq\lq topologically the same\rq\rq.  So for two doodles $f$ and $g$ with the same number of components there are homeomorphisms $s:\coprod_i S^1_i:\to \coprod_i S^1_i$ and $t:\Sigma\to \Sigma$ which make the square

$$\begin{array}{ccc}
\coprod_i S^1_i&\buildrel f \over \longrightarrow& \Sigma \\
\downarrow s&&t\downarrow \\
\coprod_i S^1_i&\buildrel g \over \longrightarrow& \Sigma
\end{array}$$
commute.  
Here we assume that $t$ respects the orientation of $\Sigma$.  
When we consider {\bf oriented} (or {\bf unoriented}) doodles, it is (or is not) required that the homeomorphism $s:\coprod_i S^1_i:\to \coprod_i S^1_i$ respects the standard orientation of the circles.  
When we consider {\bf ordered} (or {\bf unordered}) doodles, it is (or is not) required that the homeomorphism $s:\coprod_i S^1_i:\to \coprod_i S^1_i$ respects the indices.  

2. Any homotopy of regular doodles can be assumed to be divided up into Reidemeister type moves:
$H_1^+$ which generates a curl, $H_1^-$ which deletes a curl, $H_2^+$ which generates a bigon and $H_2^-$ which deletes it. See Figure~\ref{fig03H_1pm1fig04H_2pm1}. 

    \begin{figure}[h]
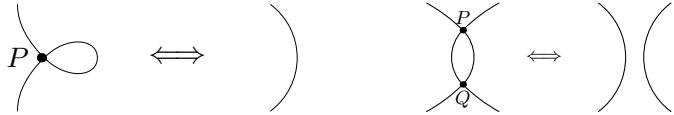

    \centerline{\epsfig{file=doodlefig03H1pm1.eps, height=1.5cm} 
    \qquad \qquad 
    \epsfig{file=doodlefig04H2pm1.eps, height=1.5cm} }
    \vspace*{8pt}
    \caption{$H_1^{\pm 1}$ and $H_2^{\pm 1}$}\label{fig03H_1pm1fig04H_2pm1}
    \end{figure}

Under $H_1^+$ a self crossing point $P$ and a monogon with empty interior are introduced and under $H_1^-$ both are eliminated. 

Under $H_2^+$ two crossing point $P, Q$ are introduced and a bigon with empty interior between them. The crossing points  may or may not be crossings of different components. Under $H_2^-$ the crossing points and  the bigon are eliminated.

3. Surface surgery can be divided into a sequence of handle additions, $h^+$, and handle eliminations, $h^-$. These operations have to be disjoint from the doodle. Handle addition can be described as follows. Let $D_0, D_1$ be disjoint closed discs in the surface disjoint from the diagram. Remove the interiors of the discs and add an annulus, $A=S^1\times [0,1]$, by its boundary to the boundary of the discs. The circle $b=S^1\times 1/2$ is called the {\bf belt} of the handle. See Figure~\ref{fig05ahandle}. 

    \begin{figure}[h]
    \centerline{\epsfig{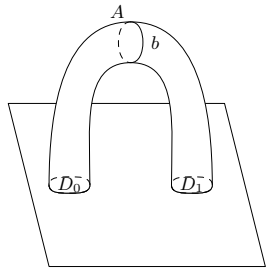}}
    \vspace*{8pt}
    \caption{A Handle}\label{fig05ahandle}
    \end{figure}

The elimination of a handle is the reverse of this procedure. Let $b$, (the belt of a handle),  be a simple closed curve in the surface disjoint from the doodle. Then a regular neighbourhood, $A$ of $b$ is homeomorphic to an annulus and can be chosen disjoint from the doodle. Remove the interior of the annulus and glue two discs via their boundary circles to the boundary of the annulus.  Since we consider admissible diagrams, we avoid handle eliminations which make the diagrams inadmissible. 

%

\section{Uniqueness of Minimal Doodle Diagrams} \label{sect:uniqueness}

In this section we see that every doodle has a unique representative doodle diagram which is minimal with respect to the number of crossing points and the genus of the underlying surface. To do this we first formalise a procedure which is common in mathematical proofs and was introduced by Newman, \cite{N}. It has since been recently considered by Matveev, \cite{M}  and Bergman, \cite{B}.

Let $G$ be a graph. We will denote the vertices of $G$ by capital Roman letters and the edges by their end points: for example $e=KL$. 

As usual, a {\bf path} in $G$ from $A$ to $B$ is a sequence of vertices
$$A=K_0, K_1, K_2, \cdots, K_{n-1}, K_n=B$$
in which $K_{i-1}K_i$ is an edge, $i=1,\ldots,n$.

A path is called {\bf simple} if it has no re-entrant vertices, $K_i=K_j$ where $0<i<j<n$. Any path can be replaced by a simple path with the same end points.

We will call $G$ a {\bf graph with levels} if each vertex, $K$, of $G$ has a level, $|K|$, which is an element of a totally ordered set, called the {\bf level set}, and that any two end vertices of an edge have different levels. This defines an ordering on the edges of $G$. If an edge $e$ of $G$ has vertices $K$ and $L$ and $|K|>|L|$ then $e$ is oriented from $K$ to $L$.  We will write an edge oriented in this manner as $e=K\searrow L$ or $L\swarrow K$ and picture $K$ on the page as being above $L$. We say that $K$ {\bf collapses} to $L$ or $L$ {\bf expands} to $K$.

A path of the form
$$A\searrow K_1\searrow K_2\cdots \searrow K_{n-1}\searrow B$$
is called a {\bf descending} path. The inverse of a descending path is called an {\bf ascending} path.

The first condition we impose on $G$ is the {\bf finite descending path property}.

{\bf FDPP:} There are no infinite descending paths.

A {\bf root} of $G$ is a sink. That is a vertex, $R$,  with no outgoing edges, $R\searrow L$. So a root is a local minimum.

\begin{lemma}If a graph with levels has the FDPP property then every vertex is either a root or is connected to a root by a descending path.
\end{lemma}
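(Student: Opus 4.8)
The plan is to argue by a form of well-founded induction using the FDPP. Given a vertex $K$, I would build a maximal descending path starting at $K$, that is, a path $K = K_0 \searrow K_1 \searrow \cdots$ in which at each stage, so long as the current terminal vertex has an outgoing edge, we extend the path along one such edge. The FDPP guarantees this construction cannot continue forever, so after finitely many steps it terminates at some vertex $K_m$.

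The key observation is then that $K_m$ must be a root: if $K_m$ had an outgoing edge $K_m \searrow L$, we could append it to obtain a longer descending path, contradicting maximality of the path we constructed. Hence $K_m$ is a sink, i.e.\ a root. If $m = 0$, then $K = K_m$ is itself a root; otherwise $m \geq 1$ and the path $K = K_0 \searrow K_1 \searrow \cdots \searrow K_m$ is a descending path connecting $K$ to the root $K_m$. Either way the conclusion holds.

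One subtlety worth spelling out is what "maximal" means here and why FDPP applies. The set of finite descending paths starting at $K$ is partially ordered by extension; I would phrase the argument so that FDPP directly rules out an infinite ascending chain in this poset (such a chain would assemble into an infinite descending path in $G$, since the paths are nested and agree on common initial segments), and therefore a maximal element exists. Alternatively, and perhaps more cleanly for the write-up, one can invoke FDPP to do induction on the "height" of a vertex, where the height of $K$ is the supremum of lengths of descending paths out of $K$ — FDPP (together with the fact that levels strictly decrease along edges, which bounds local branching issues only if the graph is locally finite) makes this well-defined as an ordinal; but to avoid any local-finiteness assumption I prefer the maximal-path formulation above, which needs nothing beyond FDPP itself.

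The main obstacle, such as it is, is purely one of careful bookkeeping: making precise that repeatedly choosing an outgoing edge yields a genuine path (no hidden issues with the level condition, which is automatic since endpoints of an edge have distinct levels and we always move to the lower one) and that non-termination of this greedy process would contradict FDPP. There is no deep content; the lemma is essentially the statement that a well-founded relation has minimal elements below every element, transported into graph language, and the proof is a two-line argument once the maximal descending path is in hand.
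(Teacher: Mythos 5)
Your argument is correct and is essentially the same as the paper's: repeatedly descend along an outgoing edge until the process stops, which it must by FDPP, and the terminal vertex is then a root. The extra care you take about what "maximal" means is fine but not needed beyond the direct observation that an unending greedy descent would itself be an infinite descending path.
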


\begin{proof} 
 If $K$ is not a root then it has a descending edge, $K\searrow K_1$. If $K_1$ is not a root it has a descending edge, $K_1\searrow K_2$ and so on. By the  FDPP property this process must terminate after a finite number of steps with a root. 
 \end{proof}

From now on we will assume that all graphs have the FDPP. Having this condition, which guarantees the existence of roots, we now look at conditions which make the root unique.

To this end we consider the {\bf unique root property} and the {\bf diamond condition}.

{\bf URP:} Every pair of vertices in a path component descend to a unique root.

To define the diamond condition we need a few definitions.

A {\bf peak, (valley)} is an ascending (descending) path composed with a descending (ascending) path. A peak (valley) is {\bf simple} if it consists of just two edges. 

{\bf DC:}  Any peak can be replaced by a valley with the same end points. In particular if $U$ descends to $X$ and $Y$ then either $X=Y$ or there is a vertex $V$ which ascends to $X$ and $Y$.
\def\addots{\mathinner{\mkern1mu\raise1pt\vbox{\kern7pt\hbox{.}}\mkern2mu
\raise4pt\hbox{.}\mkern2mu\raise7pt\hbox{.}\mkern1mu}}

$$\begin{array}{ccccccccc}
&&&&U&&&&\\
&&&\swarrow&&\searrow&&&\\
&&\addots&&&&\ddots&&\\
&\swarrow&&&&&&\searrow&\\
X&&&&&&&&Y\\
&\searrow&&&&&&\swarrow&\\
&&\ddots&&&&\addots&&\\
&&&\searrow&&\swarrow&&&\\
&&&&V&&&&\\
\end{array}$$

\begin{lemma}URP and DC are equivalent conditions.
\end{lemma}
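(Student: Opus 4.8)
The plan is to prove the equivalence by showing both implications, using the FDPP in an essential way for the harder direction. For the easy direction, assume DC holds; I want to show URP, i.e. that any two vertices $A,B$ in the same path component descend to a unique root. First I would observe that any two vertices in a path component are connected by a zigzag path (an alternating sequence of ascending and descending segments), since the underlying graph is connected on path components. The strategy is then to use DC repeatedly to ``straighten'' such a zigzag: each time a peak occurs, replace it by a valley; this reduces the number of peaks, and by induction one reduces any zigzag between $A$ and $B$ to a valley, i.e. a path $A\searrow\cdots\searrow W\swarrow\cdots\swarrow B$ through a common lower vertex $W$. In particular if $A$ and $B$ are both roots, the only valley joining them is the trivial one, forcing $A=B$; applied to the roots that $A$ and $B$ descend to (which exist by the previous lemma and FDPP), this gives URP. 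The one subtlety here is that replacing a peak by a valley may lengthen the path and create new peaks elsewhere, so the naive ``number of peaks'' is not obviously a terminating measure; this is exactly where I expect to invoke FDPP, or a more careful well-founded induction on the multiset of levels occurring in the zigzag, to guarantee the straightening process terminates.

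For the converse, assume URP; I want DC, i.e. every peak can be replaced by a valley with the same endpoints, and in particular every simple peak $X\swarrow U\searrow Y$ can be so replaced. Given such a peak, $X$ and $Y$ lie in the same path component (joined through $U$), so by URP they descend to a common unique root $R$. Concatenating a descending path $X\searrow\cdots\searrow R$ with the reverse of a descending path $Y\searrow\cdots\searrow R$ produces a valley from $X$ to $Y$ through $R$, which replaces the given peak. For a general (non-simple) peak the same argument applies verbatim, using only that the two endpoints of the peak are path-connected and hence descend to the common root. This direction is essentially immediate once URP is unpacked.

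The main obstacle is the termination of the peak-elimination process in the DC $\Rightarrow$ URP direction: a single application of DC can replace a short simple peak by an arbitrarily long valley, so I cannot simply induct on path length or on the number of peaks. I would handle this by working with the well-ordering guaranteed by FDPP: assign to a zigzag path the highest level attained at any of its ``peak'' vertices (or the multiset of such peak-levels ordered by the standard multiset ordering on the totally ordered level set), and check that each DC-replacement strictly decreases this measure. Since descending paths are finite, the level set restricted to any relevant region is well-founded, so this induction terminates and yields the desired valley between $A$ and $B$. If one prefers to avoid FDPP here, an alternative is a diamond/confluence-style argument (Newman's lemma), but since we have already assumed FDPP globally, the well-founded induction on peak-levels is the cleanest route and is what I would write up.
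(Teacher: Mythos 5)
Your URP~$\Rightarrow$~DC direction is essentially the paper's: the two ends of a peak lie in one path component, hence descend to a common (unique) root, and the two descending paths concatenate into the required valley. Your DC~$\Rightarrow$~URP direction, however, takes a genuinely different route. The paper never iterates peak-replacement at all: it applies DC exactly once, to the peak formed by two descending paths from a single vertex to roots $R_1$ and $R_2$; since a root has no outgoing descending edge, the resulting valley must be degenerate at both ends, forcing $R_1=R_2$. It then propagates root-uniqueness along an arbitrary path edge by edge (if $K_i\searrow K_{i+1}$, then $K_i$ and $K_{i+1}$ must share their unique root). Your Newman-style straightening of a general zigzag into a valley also proves the lemma and is the more standard confluence argument, but it does strictly more work than is needed here.

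Concerning the ``main obstacle'' you flag: the termination worry is in fact a non-issue, and your proposed repair is both heavier than necessary and not correctly justified as stated. When you replace a peak between two consecutive local minima $M$ and $M'$ of the zigzag by a valley, the valley has no interior local maximum, and at the two junctions the zigzag was already descending into $M$ and ascending out of $M'$, so no new local maximum can be created there either. Hence the number of peaks strictly decreases and plain induction on that number suffices; equivalently, your multiset of peak levels loses one element and gains none, so it decreases for the trivial reason that a finite multiset cannot shrink forever. What you should not lean on is the claim that ``the level set restricted to any relevant region is well-founded'': FDPP forbids infinite descending \emph{paths in the graph}, but gives no well-foundedness of the totally ordered level set, so a genuine Dershowitz--Manna multiset induction would still need the no-new-peaks observation above. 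Either make that observation explicit, or adopt the paper's one-shot argument, which sidesteps the whole question.
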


\begin{proof} 
 Suppose a graph has the URP and $U$ descends to $X$ and $Y$. Then $X$ and $Y$ are clearly in the same path component and so both descend to a common root, $V$ say.

Conversely suppose the graph has the DC and $X$ descends to different roots $R_1$ and $R_2$. Then unless $R_1=R_2$ this contradicts the DC. So every vertex descends to a unique root.

Now suppose that $X$ and $Y$ are joined by a path 
$$X=K_0, K_1,  K_2, \cdots, K_{n-1}, K_n=Y$$
 and yet descend to different roots $R_1$ and $R_2$. Then somewhere in the path, vertices $K_i$ and $K_{i+1}$ descend to different roots. We may as well assume that $K_i\searrow K_{i+1}$. Then $K_i$ descends to one root and via $K_{i+1}$ to another, contradicting the above.
\end{proof}

We say that a graph with levels is a {\bf proof reduction} graph if it has both the FDPP and the DC/URP properties.
By the above, all path components of a proof reduction graph have a unique root. This is clearly a useful property but we need practical methods to recognise such a graph. We do this by localising the DC as follows.

{\bf LDC:} A graph has the {\bf local diamond condition} if given a simple peak $X\swarrow U\searrow Y$ there is a path, $X=K_0, K_1, K_2, \cdots, K_{n-1}, K_n=Y$, from $X$ to $Y$ such that if the path contains a simple peak $K_{i-1}\swarrow K_i\searrow K_{i+1}$ then there is an edge $U\searrow K_i$.

\begin{lemma}The local diamond condition, LDC and the diamond condition, DC are equivalent.
\end{lemma}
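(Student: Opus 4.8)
The plan is to prove the two implications separately, and the interesting direction is LDC $\Rightarrow$ DC, since DC $\Rightarrow$ LDC is essentially immediate (given DC, any simple peak $X\swarrow U\searrow Y$ can be replaced by a valley, i.e.\ a path through a common lower vertex $V$, and such a path contains no simple peaks at all, so the LDC requirement is vacuous). So the heart of the argument is to show that, assuming FDPP and LDC, every peak can be replaced by a valley with the same endpoints.

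\medskip

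First I would fix a path component and argue by Noetherian (well-founded) induction on vertices, using the partial order generated by $\searrow$, which is well-founded precisely because of FDPP. The statement to prove is: for every vertex $U$, and every peak with apex $U$, i.e.\ $X \swarrow\!\!\cdots\!\!\swarrow U \searrow\!\!\cdots\!\!\searrow Y$, there is a valley from $X$ to $Y$ (equivalently $X$ and $Y$ descend to a common vertex). By shrinking the two descending legs one edge at a time, it suffices to handle the case of a \emph{simple} peak $X \swarrow U \searrow Y$: if we can fill every simple peak with a valley, then a standard "tiling" argument propagates the filling along longer peaks, each application of the simple case being controlled by the induction hypothesis at a strictly lower vertex than $U$. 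So I reduce to: given a simple peak $X\swarrow U\searrow Y$, produce a valley from $X$ to $Y$.

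\medskip

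Now LDC hands us a path $X=K_0,K_1,\dots,K_n=Y$ in which every simple peak $K_{i-1}\swarrow K_i\searrow K_{i+1}$ satisfies $U\searrow K_i$, so each such apex $K_i$ lies strictly below $U$. The idea is to straighten this path into a valley by repeatedly applying the induction hypothesis: each interior simple peak of the path has apex $K_i$ with $|K_i|<|U|$, so by induction that simple peak can be replaced by a valley, lowering the path; the new peaks created in the process again have apices below $U$ (this is where one must be a little careful — one should track that all apices encountered while processing stay $\le$ some vertex $<U$, or alternatively run a secondary induction/minimality argument on, say, the multiset of levels of the peak-apices of the current path). Iterating, and invoking FDPP to guarantee termination of this straightening, the path from $X$ to $Y$ is eventually turned into one with no interior simple peaks, i.e.\ a path that first descends and then ascends — a valley. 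That gives DC.

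\medskip

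The main obstacle I anticipate is exactly the bookkeeping in that last step: making precise why the straightening process terminates and why every peak apex generated along the way remains strictly below $U$ so that the outer induction hypothesis legitimately applies. The clean way is probably a double induction — the outer one well-founded on the apex $U$ via FDPP, and an inner one on a suitable complexity measure of the path (e.g.\ number of interior local peaks, or the sorted tuple of their levels under a well-order derived from FDPP) — so that each LDC-driven local replacement strictly decreases the inner measure while only introducing peaks governed by the outer hypothesis. Everything else (the reduction from general peaks to simple peaks, and DC $\Rightarrow$ LDC) is routine diagram-chasing of the kind already illustrated by the peak/valley picture above.
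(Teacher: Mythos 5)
Your proposal is correct in outline but takes a genuinely different route from the paper. You prove DC directly by Noetherian induction on the apex $U$ under the well-founded order generated by $\searrow$ (this is the classical Newman/Huet proof of the diamond lemma, adapted to the weaker local hypothesis): reduce general peaks to simple ones by the tiling argument, then straighten the LDC-supplied path from $X$ to $Y$ by repeatedly filling its interior simple peaks, whose apices $K_i$ satisfy $U\searrow K_i$ and hence fall under the inductive hypothesis. The paper instead proves LDC $\Rightarrow$ URP (and invokes the previously established equivalence URP $\Leftrightarrow$ DC) by a minimal-counterexample argument: it takes an irregular vertex $L$ all of whose descending neighbours are regular, chooses a simple peak $X\swarrow L\searrow Y$ whose LDC path has \emph{shortest possible length}, and derives a contradiction because any interior peak apex $K_i$ is regular and splits the path into a strictly shorter counterexample. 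The paper's extremal argument buys a cleaner termination bookkeeping (minimality of path length does all the work), at the cost of routing through roots and URP; your argument is more self-contained and generalises beyond the existence of roots, but it genuinely needs the inner termination measure you flag.

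On that flagged point: of your two candidate inner measures, neither quite works as stated. The number of interior local peaks can \emph{increase} when one peak is replaced by a valley (new peaks can appear at both junction vertices $K_{i-1}$ and $K_{i+1}$), and the level set is only totally ordered, not well-ordered, so a tuple of levels does not terminate by itself. The correct measure is the \emph{multiset of the peak apices themselves}, ordered by the multiset extension of the relation ``is reachable by a nonempty descending path'' (which is a well-founded strict order by FDPP): each replacement deletes the apex $K_i$ and introduces only apices strictly below $K_i$, so the multiset strictly decreases and the straightening terminates. With that substitution your double induction closes, and the proof is complete.
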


\begin{proof} 
Clearly DC implies LDC because a valley does not have a peak. 

Now consider a graph with the LDC.  Because of FDPP every vertex which is not a root is connected to at least one root by a  descending path. Our task is to show that this root is unique.

Let us call a vertex {\bf regular} if it  is connected to a unique root by a descending path: otherwise we call it {\bf irregular}. Clearly regular vertices exist. A root is an example. Our task is to show that irregular vertices do not exist. We will assume the contrary and obtain a contradiction.

In that case there must be an irregular vertex $L$ such that for every edge $L\searrow K$ the vertex $K$ is regular. If not we could construct an infinite descending path of irregular vertices. Indeed, we will chose $L$ so that every descending path from $L$ must consist of regular vertices apart from $L$.

For such an irregular vertex $L$ there is a simple peak $X\swarrow L\searrow Y$ such that $X$, $Y$ descend to unique but different roots. Chose $X$ and $Y$ so that the path, $X=K_0, K_1, K_2, \cdots, K_{n-1}, K_n=Y$ predicted by the LDC has shortest possible length.

The hypothesis of the LDC means that if the path joining $X$ to $Y$  has a simple peak $K_{i-1}\swarrow K_i\searrow K_{i+1}$ as a subpath then there is an edge $L\searrow K_i$. This means that $K_i$ is regular and so  descends to a unique root. 

This root must be different from one of the distinct roots of the pair  $X$, $Y$. So either the pair $X, K_i$ or the pair $K_i, Y$ has a shorter path.

So the path joining $X$ to $Y$ has no simple peaks and is therefore either a) ascending, b) descending or c) a valley. If a) or b) then $X$, $Y$ have a common root. If c) and  If $V$ is the base of this valley then $V$ has a unique root which must be the same for $X$ and $Y$.

Therefore irregular vertices cannot exist and all vertices are regular and connected to a unique root by a descending path.  Hence the URP is satisfied which implies the LDC. ~ 
\end{proof}

We now consider examples of proof reduction graphs.

{\bf Free Groups} The vertices are words in the symbols $x\in X$ and $x^{-1}\in X^{-1}$. The level of a word is its length. The expansions are insertions of pairs $xx^{-1}$ or $x^{-1}x$ in the words. It is easy to see that this has the local diamond condition. Hence every word  is equivalent to a unique reduced word. 

{\bf The singular braid monoid embeds in a group},  \cite{FKR}
Here the vertices are singular braids up to braid equivalence. The expansions are the introduction of pairs of cancelling singular crossings. The levels are the number of singular crossings.

The graph, $\cal D$, we are interested in has vertices consisting of (admissible and ordered) oriented (or unoriented) 
doodle diagrams on surfaces,  
$S=(\Sigma, D)$.  We assume that homeomorphic diagrams are the same vertex of $\cal D$.  
The level of a vertex is the number of crossings minus the Euler characteristic of the surface.  
The moves $H_1^{\pm1}, H_2^{\pm1}$ and  $h^{\pm1}$ raise or lower the level and correspond to the edges of the graph. 


\begin{theorem}\label{thm:proofreductiongraph}
The graph, $\cal D$,  of oriented (or unoriented) doodle diagrams with moves and levels defined above is a proof reduction graph.
\end{theorem}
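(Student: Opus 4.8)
The plan is to check directly that $\cal D$ has the two properties in the definition of a proof reduction graph: the finite descending path property (FDPP) and the diamond condition (DC). By the lemmas already proved, DC is equivalent to the unique root property and to the local diamond condition (LDC), so it suffices to establish FDPP and LDC. First I would record the easy structural facts: the level $|S| = c(D) - \chi(\Sigma)$ of a vertex $S = (\Sigma,D)$, where $c(D)$ is the number of crossings, is a homeomorphism invariant and hence well defined on vertices of $\cal D$; it is an integer; and each of the moves $H_1^{\pm 1}, H_2^{\pm 1}, h^{\pm 1}$ changes it by a nonzero amount---$H_1^-$ lowers it by $1$, while $H_2^-$ and $h^-$ lower it by $2$---so $\cal D$ really is a graph with levels in which every collapse strictly lowers the level.

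For the FDPP I would use admissibility. The number $n$ of doodle components is unchanged by every move (and by homeomorphism), so it is constant on each path component of $\cal D$. Since each connected component of $\Sigma$ must meet the image of $f$, and the image of each circle $S^1_i$ is connected and therefore lies in a single component of $\Sigma$, the surface $\Sigma$ has at most $n$ components; as a closed oriented surface has Euler characteristic at most $2$ on each component, $\chi(\Sigma) \le 2n$ and hence $|S| \ge -2n$. Thus on a fixed path component the levels are integers bounded below by a constant, and since every collapse lowers the level there can be no infinite descending path.

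The real work is the LDC. Given a simple peak $X \swarrow U \searrow Y$, the two collapses delete from the diagram $U$ two ``features'': a curl (an $H_1^-$), a bigon (an $H_2^-$), or a handle (an $h^-$, presented by a belt circle $b$ that is disjoint from $D$ and hence lies in a single region of $U$). The basic device is: whenever the two features can be realised with disjoint supports, each one survives the deletion of the other, so deleting both in either order reaches a common diagram $Z$, and the resulting valley $X \searrow Z \swarrow Y$ has no interior simple peak, so the LDC is satisfied trivially. I would then run through the pairs of feature types and show that disjoint supports can be arranged in all but a short list of genuinely interacting local configurations---for instance two handles whose belt circles lie in the same region, or a curl and a bigon (or two bigons) sharing a crossing. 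Each such configuration is local, and for each I would produce an explicit path from $X$ to $Y$: in the simplest ones the two deletions yield homeomorphic diagrams, so $X = Y$ as vertices of $\cal D$ and the trivial path works; in the rest the path is short and each of its interior simple peaks $K_{i-1}\swarrow K_i\searrow K_{i+1}$ has $K_i$ obtained from $U$ by a single collapse, which supplies the edge $U\searrow K_i$ demanded by the LDC.

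The step I expect to be the main obstacle is the handle--handle case in which the two belt circles $b_1,b_2$ lie in one region $R$ of $U$ and cannot be isotoped off one another inside $R$. Here surgering along $b_1$ destroys $b_2$, so there is no obvious common descendant; one must instead analyse how the compressed region and the residual arcs of $b_2$ behave after the surgery and use this to build a detour whose peaks remain controlled by $U$. Getting the genus bookkeeping right---separating versus non-separating belt circles, and the requirement that every intermediate diagram stay admissible---is where the care lies, and this is the doodle counterpart of the delicate step in Kuperberg's theorem, going beyond the purely combinatorial examples of free groups and the singular braid monoid.
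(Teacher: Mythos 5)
Your overall strategy is the same as the paper's: verify FDPP and then the local diamond condition case by case. Your FDPP argument is fine, and in fact somewhat cleaner than the paper's: the observation that the number of doodle components is a path-component invariant, that admissibility bounds the number of surface components by it, and hence that the integer level $c(D)-\chi(\Sigma)$ is bounded below on each path component, is a correct and complete argument.

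The gap is in the LDC verification, and you have located it yourself without closing it: the peak consisting of two handle eliminations whose belt circles $b_1,b_2$ meet essentially in a common region. Saying that ``one must analyse how the compressed region and the residual arcs of $b_2$ behave after the surgery and use this to build a detour'' is a statement of the problem, not a solution; as it stands there is no candidate for a common descendant, since surgery on $b_1$ destroys $b_2$. The paper's resolution is a specific geometric trick you would need to supply: take thin annular neighbourhoods $A_1,A_2$ of $b_1,b_2$, both disjoint from the diagram, and look at the boundary circles of $A_1\cup A_2$. These are disjoint from $b_1$, from $b_2$ and from $D$. If some boundary component $c$ is essential, surgery along $c$ produces a vertex $S'$ with $U\searrow S'$, and further surgeries on $b_1$ and $b_2$ inside $S'$ produce the valley $X\searrow S''\swarrow S'\searrow S'''\swarrow Y$ whose unique simple peak $S'$ is dominated by $U$, exactly as LDC demands. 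If every boundary component is inessential, each topological component of the diagram lies in a disc, the doodle is planar, and both $X$ and $Y$ descend by $h^-$ moves to the same disjoint union of diagrams on spheres. (There is a further admissibility subcase when $c$ separates off a diagram-free positive-genus piece.) Without this, or an equivalent device, the proof does not go through.

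A secondary weakness: your claim that the remaining interacting $H$-type configurations are handled by ``short local paths'' undersells the bigon--bigon case in which the created and destroyed bigons share two crossings. There the two resulting diagrams have a pair of circles nested in opposite orders, and the common descendant is reached not by a local move but by a detour through handle eliminations that convert the relevant circles into floating trivial components (the paper isolates this as a lemma on floating components, with subcases depending on whether the rest of the diagram meets the same surface component). The resulting paths are valleys, so LDC is satisfied vacuously, but they must actually be constructed; ``the configuration is local'' is not an argument here.
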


Before proving this theorem, we prepare some terminology and a lemma. 

A {\bf trivial doodle diagram with one component} is a doodle diagram such that the diagram is a simple closed curve and the surface is a $2$-sphere.  A {\bf trivial doodle diagram with $n$ components} for $n \geq 1$ is a doodle diagram 
which is the disjoint union of $n$ trivial doodle diagrams with one component.  

A {\bf floating component} of a doodle diagram is a component which bounds a disc in the surface disjoint from the rest of the diagram. 

Note that we do not call a simple closed curve which bounds a disc in the ambient surface a trivial doodle diagram unless the surface is a $2$-sphere.  For example, let $(\Sigma, C)$ be a doodle diagram such that $\Sigma$ is a torus and $C$ is a simple closed curve which bounds a disc in the torus.  Then $(\Sigma, C)$ is not a trivial doodle diagram in our sense. We call $C$ a floating component, not a trivial diagram. 

\begin{lemma}\label{lem:floating}
Let $C$ be a  floating component of a doodle diagram $S= (\Sigma, D)$ and let 
$\Sigma_0$ be the connected component of $\Sigma$ containing $C$.     
If $(\Sigma_0, C)$  is not a trivial doodle diagram, then there exists a doodle diagram $S'$ such that 
$S \searrow S'$.  
\end{lemma}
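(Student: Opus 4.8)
The plan is to exhibit an explicit sequence of moves that lowers the level, starting from the floating component $C$ and the connected component $\Sigma_0$ of $\Sigma$ containing it. Recall that the level of a vertex is (number of crossings) minus (Euler characteristic of the surface), so $S \searrow S'$ means we must either decrease the crossing number while not decreasing the Euler characteristic, or increase the Euler characteristic while not increasing the crossing number (or some combination). Since $C$ bounds a disc $\Delta$ in $\Sigma$ disjoint from the rest of $D$, the first thing I would do is simplify $C$ itself: $C$ is a generically immersed circle in the disc $\Delta$, so by the classical fact about curves in the plane (Whitney, or an innermost-monogon/bigon argument) one can remove its self-crossings one at a time via $H_1^{-}$ and $H_2^{-}$ moves supported inside a neighbourhood of $\Delta$, each of which is a descending edge. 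Thus if $C$ has any crossings we are immediately done, and we may reduce to the case where $C$ is an embedded simple closed curve bounding a disc, with the rest of $D$ disjoint from $\Delta$.

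Next I would treat the case where $C$ is a simple closed curve bounding a disc $\Delta$ in $\Sigma_0$ but $(\Sigma_0,C)$ is not a trivial doodle diagram. By hypothesis this means $\Sigma_0$ is not a $2$-sphere, or $\Sigma_0$ is a sphere but carries other parts of $D$ besides $C$; since admissibility forces every component of $\Sigma$ to meet $D$, the genuinely new case is that $\Sigma_0$ has positive genus. Here the key move is a handle elimination $h^{-}$: I want to find a belt curve $b$ in $\Sigma_0$, disjoint from $D$, whose surgery raises the Euler characteristic (adding two discs), giving a descending edge — provided the resulting diagram is still admissible. The complement $\Sigma_0 \setminus \Delta$ is a once-punctured surface of positive genus (or positive genus plus other punctures for the rest of $D$), so it contains an essential simple closed curve $b$ disjoint from $D$; performing $h^{-}$ along such a $b$ strictly increases $\chi$, and one chooses $b$ so that neither of the two resulting surface pieces becomes disjoint from $D$ (possible because $C$ itself lies on one side and any other part of $D$ can be kept on a chosen side, or $b$ can be taken nonseparating so no new component is created at all). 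Taking $b$ nonseparating whenever the genus allows it is the cleanest option: then $\Sigma_0$ stays connected, still meets $D$, $\chi$ goes up by $2$, crossings are unchanged, and the level drops.

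The remaining delicate point — and the one I expect to be the main obstacle — is the boundary case where $\Sigma_0$ has genus $1$ and $C$ is the only part of $D$ on $\Sigma_0$, so the only available essential curve in $\Sigma_0 \setminus \Delta$ is isotopic to a meridian whose $h^{-}$ surgery would disconnect $\Sigma_0$ into a sphere-with-$C$ (trivial!) together with a sphere carrying nothing, which is inadmissible. One has to argue that this configuration is not actually a counterexample: in that situation $C$ already bounds a disc on the other side too (a torus minus a disc retracts, but $C$ nonessential in the torus means the torus is $\Delta \cup (\text{torus-with-hole})$), and one instead performs the handle elimination along the longitude-type belt that survives after first isotoping $C$ — or observes that "$(\Sigma_0,C)$ is not trivial" with $\Sigma_0$ a genus-one surface and $C$ its only diagram part is precisely the excluded case, because then $\Sigma_0\setminus C$ has a disc component and the complementary genus-one-with-one-hole piece still contains a nonseparating curve $b$ disjoint from $C$; eliminating that handle yields a sphere with the single simple closed curve $C$, which is admissible and has higher $\chi$. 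So the careful bookkeeping of which curves remain available, and checking admissibility of the result in the low-genus edge cases, is where the real work lies; everything else is a direct application of the curve-simplification facts and the definitions of the moves.
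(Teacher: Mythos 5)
Your overall strategy (produce a descending $h^{-}$ move) is the right one, but there is a genuine gap in the main case. You claim that the once-punctured surface $\Sigma_0\setminus\Delta$ ``contains an essential simple closed curve $b$ disjoint from $D$''. Your justification only gives disjointness from $C$ (via the disc $\Delta$); it says nothing about the rest of the diagram, which may also lie on $\Sigma_0$ and may fill it. For example, let $\Sigma_0$ be a torus carrying the Hopf doodle together with a small floating circle $C$ placed inside its square complementary region: every essential simple closed curve on $\Sigma_0$ meets the Hopf doodle, so no genus-reducing $h^{-}$ move is available at all, yet the lemma must still produce a descending edge. The paper's proof splits into exactly the two cases this forces on you: if $(D\setminus C)\cap\Sigma_0\neq\emptyset$, it performs $h^{-}$ along an \emph{inessential} belt curve parallel to $C$, splitting off a $2$-sphere that carries $C$ (this is admissible since the remaining piece still meets $D$, and $\chi$ increases by $2$, so the level drops); only when $D\cap\Sigma_0=C$ does it reduce the genus by surgery on a curve avoiding $C$. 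Surgery along a null-homotopic belt curve is permitted by the definition of $h^{-}$, and it is the move your argument is missing.

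Two smaller points. Your opening step --- removing the self-crossings of $C$ by $H_1^{-}$ and $H_2^{-}$ alone, citing Whitney --- is false for a general immersed circle in a disc: the poppy has no monogons or bigons and is a nontrivial planar doodle, which is the whole point of the theory. It is harmless here only because a floating component bounds a disc and is therefore embedded, so the step is vacuous. Also, your worry about the genus-one edge case is a non-issue: surgery on a nonseparating curve never disconnects the surface, so the result is a sphere still carrying $C$ and is admissible, as you eventually conclude after some unnecessary back-and-forth.
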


\begin{proof}
We consider two cases: 
\begin{itemize}
\item[(1)] $(D \setminus C) \cap \Sigma_0 \neq \emptyset$, i.e., there exists a component of $D \setminus C$ on $\Sigma_0$. 
\item[(2)] $(D \setminus C) \cap \Sigma_0 = \emptyset$, i.e., the rest of the diagram $D \setminus C$ misses $\Sigma_0$. 
\end{itemize}

In case 1), apply an $h^{-1}$ move along a simple closed curve surrounding $C$ and we obtain an (admissible) doodle diagram which is the  disjoint union of $(\Sigma, D \setminus C)$ and a trivial doodle diagram with one component.  

In case 2), the surface $\Sigma_0$ has positive genus. We can apply an $h^{-1}$ move along a simple closed curve on $\Sigma_0$ avoiding $C$ to reduce the genus. 
\end{proof}

\begin{proof} (Proof of Theorem~\ref{thm:proofreductiongraph}) 
First we show that $\cal D$ has the FDPP property. 
Suppose that there is an infinite descending path and let $S=(\Sigma, D)$ be a vertex on the path. 
The number of crossings of $D$ is an upper bound of the number of $H_1^{-1}$ and $H_2^{-1}$ moves appearing on the path.  Since we are considering admissible doodle diagrams, the topology of $\Sigma$ with the number of circles of $D$ makes an  bound of the numbers of $h^{\pm1}$ moves appearing on the path.  This contradicts to that the path is infinite.  Thus $\cal D$ has the FDPP property. 

We will show that $\cal D$ has the DC or LDC property by looking at the possible cases.

Suppose $S_1,S_2,S_3$ is a sequence of diagrams created by $H_1^{\pm 1}, H_2^{\pm 1}$ moves such that 
$S_1\swarrow S_2\searrow S_3$. The first move creates a region $R$ and the second destroys a region $R'$. If $R=R'$ then the two moves cancel and $S_1$ is the same as $S_3$. The other cases are illustrated in 
Figure~\ref{fig18regions}. 

    \begin{figure}[h]
    \centerline{\epsfig{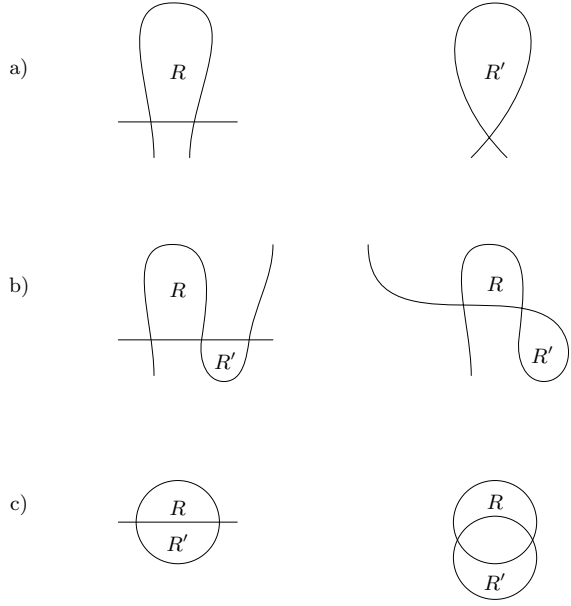}}
    \vspace*{8pt}
    \caption{The regions $R$ and $R'$}\label{fig18regions}
    \end{figure}

In case a) the regions $R$ and $R'$ are disjoint so by reversing the order of the moves there is an intermediate $S_2'$ such that $S_1\searrow S'_2\swarrow S_3$. The figure illustrates an $H_2^+$ and an $H_1^-$. 

In case b) the regions $R$ and $R'$ have a point in common. The region  $R$ is created by an $H_2^+$ move and the region  $R'$ is created accidentally.  The  region  $R'$ is then deleted by an $H_2^-$ move (subcase b-i, 
on the left of the figure) or by 
 an $H_1^-$ move (subcase b-ii, on the right of the figure).
In case b-i) both moves can be dispensed with since $S_1$ is homeomorphic  to $S_3$. 
In case b-ii)  the result,  $S_3$, could have been created with an $H_1^+$ move. So $S_1$ is joined to $S_3$ by an edge.

In case c) the regions $R$ and $R'$ have two points in common. 
In case c-i),  on the left of the figure, let $S=(\Sigma, D)$ be a doodle diagram which is obtained from 
$S_2$ by removing the circle bounding $R \cup R'$. Then   
$S_1$ (or  $S_3$, resp.)  is obtained from $S$ by adding a floating component $C_1$ (or $C_3$, resp.). 
Let $S'_2$ be the disjoint union of $S$ and a trivial doodle diagram with one component.   
Then, as in the case 1) of the proof of Lemma~\ref{lem:floating}, we see that  $S_1\searrow S'_2\swarrow S_3$. 
In case c-ii),  on the right of the figure, let $S_i = (\Sigma, D_i)$ $(i=1,2,3)$ be the doodle diagrams, and  
let $C_2$ and $C'_2$ be the circles of $D_2$ facing $R$ and $R'$ in the figure, respectively.  In $D_1$ their corresponding circles $C_1$ and $C'_1$ are lying such that $C_1$ is inside of $C'_1$, and in $D_3$ their corresponding circles $C_3$ and $C'_3$ are lying such that $C'_3$ is inside of $C_3$.  
Note that $D_i = (D_2 \setminus (C_2 \cup C'_2) ) \cup (C_i \cup C'_i)$ for $i=1,3$.  
Let $\Sigma_0$ be the component of the surface $\Sigma$ which contains $C_2$ and $C'_2$.   
There are two subcases: 
\begin{itemize}
\item c-ii-1) $(D_2 \setminus (C_2 \cup C'_2) ) \cap \Sigma_0 \neq \emptyset$. 
\item c-ii-2) $(D_2 \setminus (C_2 \cup C'_2) ) \cap \Sigma_0 = \emptyset$. 
\end{itemize}

In case c-ii-1), 
let $S'_2$ be the disjoint union of $(\Sigma, D_2 \setminus (C_2 \cup C'_2)) $ and a trivial doodle diagram with two components. We see that  
there is a path $S_1 \searrow S'_1 \searrow  S'_2 \swarrow S'_3 \swarrow S_3$ for some $S'_1$ and 
$S'_3$ as in the case 1) of the proof of Lemma~\ref{lem:floating}. 

In case c-ii-2), 
let $S'_2$ be the disjoint union of $(\Sigma \setminus \Sigma_0, D_2 \setminus (C_2 \cup C'_2)) $ and a trivial doodle  diagram with two components. By the argument of the proof of Lemma~\ref{lem:floating}, we see that there is a path 
$S_1 \searrow S'_1  \searrow  S'_2 \swarrow S'_3 \swarrow S_3$ when $\Sigma_0$ is a $2$-sphere, and there is a path 
$S_1 \searrow S'_1  \searrow  \cdots \searrow 
S'_2 \swarrow \cdots \swarrow   S'_3 \swarrow S_3$ when $\Sigma_0$ has positive genus.  

We now consider mixtures of  $h^{\pm 1}$ and $H_1^{\pm 1}, H_2^{\pm 1}$ moves. 

Suppose we have a simple peak $S_1\swarrow S_2\searrow S_3$. There are various cases to consider. Let $S_1\swarrow S_2$ be an $H_1^+$ move and $S_2\searrow S_3$ an $h^-$ move. The expansion creates a monogon and the handle elimination takes place along an essential curve which is disjoint from the diagram and hence disjoint from the monogon. It follows that the two operations can be reversed.

A similar argument holds if the initial expansion creates a bigon with an $H_2^+$ move.

Now consider $S_1\swarrow S_2\searrow S_3$ in which the first expansion makes a handle from the  boundary of disks $D_1$ and $D_2$ and the second move eliminates a bigon (or monogon). In either case this must be disjoint from the two disks in order to happen so the two moves can be reversed.

Finally, suppose the expansion, $S_1\swarrow S_2$, creates a handle and the collapse, $S_2\searrow S_3$, deletes a handle. This means that in the middle diagram $S_2$ there is a simple closed curve $b_1$ which is the belt of the added handle and a simple closed curve $b_2$ upon which the surgery takes place. Both curves are disjoint from the  doodle diagram $D_2$ on $\Sigma_2$ with $S_2 = (\Sigma_2, D_2)$. 

If $b_1$ and $b_2$ are disjoint then we can reverse the operations. If $b_1$ and $b_2$ are not disjoint then we can assume that they meet transversely. Let $A_1$ and $A_2$ be annular neighbourhoods of $b_1$ and $b_2$. If these are sufficiently thin then,

\begin{enumerate}
\item  the components of $A_1\cap A_2$ are square neighbourhoods of the intersection points of  $b_1$ and $b_2$,
\item $A_1\cup A_2$ is a connected surface with boundary,
\item $A_1\cup A_2$ is disjoint from the doodle diagram $D_2$. 
\end{enumerate}

Let the components of $\partial (A_1\cup A_2)$ be $c_1\cup c_2\cup\cdots\cup c_n$.

If each $c_i$ is inessential then it spans a disc $U_i$ in the surface $\Sigma_2$. So the (topological) components of the doodle diagram $D_2$ each lie in a disc. (It follows that the doodle is planar.) 
In this case, there is a sequence of $h^-$ moves 
transforming  $S_1$ into a doodle diagram $S'=(\Sigma', D')$ which is a disjoint union of doodle diagrams on $2$-spheres, and there is a sequence of $h^-$ moves  transforming $S_3$ into the same $S'=(\Sigma', D')$.

Otherwise, at least one of the boundary components is essential: call it $c$. Then $c$ is disjoint from $b_1$ and $b_2$. 

Case 1) Suppose that $c$ is a non-separating loop of $\Sigma_2$  or that $c$ is a separating loop and 
each component of the surface obtained by the surgery on $c$ intersects with the diagram $D_2$.  
Let $S'$ be the result of surgery on $c$ in $S_2$, which is an admissible doodle diagram.  Let $S''$, ($S'''$) be the result of surgery on $b_1$, ($b_2$) in $S'$. Note that $S''$, ($S'''$) is also the result of surgery on $c$ in $S_1$, ($S_3$).

So there is a path $S_1\searrow S''\swarrow S'\searrow S'''\swarrow S_3$ and an edge $S_2\searrow S'$ which implies the local diamond condition.

It follows that the conditions of the proof reduction graph are satisfied and the theorem is proved. 

Case 2) Suppose that $c$ is a separating loop and one of the component of the surface obtained by the surgery on $c$ does not intersect with $D_2$.  In this case, the component missing $D_2$ is a closed connected surface with positive genus. This implies that we can first apply $h^{-1}$ moves to $S_1$ and $S_3$ along simple closed curves on the surface killing the genus, and we can reduce the case into the case that $c$ is inessential.  
\end{proof}

\begin{definition}{\rm 
A doodle diagram is {\bf minimal} if the interior of every region is 
simply connected and there are no monogons and bigons. 
}\end{definition}

Theorem~\ref{thm:proofreductiongraph} implies a Kuperberg type theorem, see \cite{GK}.

\begin{theorem}\label{thm:minimal}
If two minimal diagrams represent the same oriented (or unoriented) doodle then they are homeomorphic.
\end{theorem}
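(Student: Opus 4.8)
The plan is to deduce Theorem~\ref{thm:minimal} directly from Theorem~\ref{thm:proofreductiongraph} together with the machinery of proof reduction graphs developed in this section. First I would observe that a doodle, by definition, is an equivalence class of diagrams under the equivalence generated by homeomorphism, the moves $H_1^{\pm1}, H_2^{\pm1}$, and the handle moves $h^{\pm1}$; hence two diagrams represent the same doodle precisely when they lie in the same path component of the graph $\cal D$. By Theorem~\ref{thm:proofreductiongraph}, $\cal D$ is a proof reduction graph, so by the equivalence URP $\Leftrightarrow$ DC and the lemmas preceding it, every path component of $\cal D$ has a \emph{unique} root, i.e.\ a unique vertex that is a sink (a local minimum with no outgoing edge $R \searrow L$).

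The crux is then to identify ``root of $\cal D$'' with ``minimal doodle diagram'' in the sense of the Definition just stated. I would argue both inclusions. If a diagram $S = (\Sigma, D)$ is \emph{not} minimal, then either it has a monogon or a bigon, or some region has non-simply-connected interior. In the first case an $H_1^{-1}$ or $H_2^{-1}$ move applies and lowers the level (crossings minus Euler characteristic), giving an outgoing edge $S \searrow S'$, so $S$ is not a root. In the second case, a region whose interior is not simply connected contains an essential simple closed curve of the surface disjoint from the diagram — here I would invoke the reasoning behind Lemma~\ref{lem:floating} and the handle-elimination analysis in the proof of Theorem~\ref{thm:proofreductiongraph} to produce an $h^{-1}$ move reducing the genus (or splitting off a $2$-sphere), again yielding an outgoing edge. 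Thus a non-minimal diagram is never a root. Conversely, if $S$ is minimal, I claim no move $H_1^{-1}, H_2^{-1}, h^{-1}$ can be applied: there are no monogons or bigons to remove, and since every region has simply connected interior, any simple closed curve in the complement of $D$ bounds a disc on both sides within its region's closure in a way that makes any handle elimination either impossible or inadmissible. So a minimal diagram has no outgoing edge and is a root.

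Given this identification, the theorem follows: if two minimal diagrams $S_1$ and $S_2$ represent the same doodle, they are roots in the same path component of $\cal D$, and by the URP the root of a path component is unique, so $S_1 = S_2$ as vertices of $\cal D$ — and vertices of $\cal D$ are diagrams up to homeomorphism (``we assume that homeomorphic diagrams are the same vertex''), which is exactly the conclusion that $S_1$ and $S_2$ are homeomorphic.

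The step I expect to be the main obstacle is the careful verification that a minimal diagram is a root, specifically that no handle elimination $h^{-1}$ applies. The subtlety is that a region with simply connected interior could still, a priori, permit a handle move along a curve that threads through several regions; one must show that the ``interior of every region is simply connected'' condition, combined with admissibility, genuinely forbids every $h^{-1}$ move (equivalently, forces the surface to have the minimum genus allowed), and dually that a diagram which is not minimal always admits \emph{some} level-lowering move. This is precisely the content linking minimality to the characterization Theorem~\ref{thm:characterize} (minimum crossing number, minimum genus, maximum component number), and I would lean on the floating-component analysis of Lemma~\ref{lem:floating} and the essential/inessential boundary-curve dichotomy from the proof of Theorem~\ref{thm:proofreductiongraph} to handle it cleanly.
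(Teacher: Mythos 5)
Your proposal is correct and follows essentially the same route as the paper: identify minimal diagrams with roots of the proof reduction graph $\cal D$ (no $H_1^-,H_2^-$ moves apply for lack of monogons/bigons, and no $h^-$ move applies because any simple closed curve disjoint from the diagram lies in a simply connected region, so surgery on it would split off an empty sphere and violate admissibility), then invoke uniqueness of roots from Theorem~\ref{thm:proofreductiongraph}. The only difference is that you also argue the converse (non-minimal $\Rightarrow$ not a root), which is not needed for this theorem.
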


\begin{proof} 
Since there are no monogons and bigons, no $H_1^-, H_2^-$ moves can be initiated. Because the regions are simply connected the same is true for $h^-$ moves. It follows that a minimal diagram represents a root and roots are unique.
\end{proof}

A doodle is called {\bf trivial} if it has a trivial doodle diagram as a representative.  

By Theorem~\ref{thm:minimal}, a doodle is trivial if and only if its unique minimal diagram is a trivial doodle diagram.  

The poppy  and  the Borromean doodle are non-trivial because the diagrams depicted in Figure~\ref{fig01Poppyfig02Borromean} are minimal diagrams which are not trivial doodle diagrams.  
Similarly the Hopf doodle is non-trivial because the diagram on the torus 
with one square region is a minimal diagram which is not a trivial doodle diagram.

Theorem~\ref{thm:proofreductiongraph} also implies that a minimal diagram is characterized as a diagram with minimum crossing number and the ambient surface has minimum genus and  maximum  component number as described below.  

For a diagram $(\Sigma, D)$ let $c(D)$, $g(\Sigma)$ and ${\rm comp}(\Sigma)$ stand for the number of crossings of $D$, the genus of $\Sigma$ and the number of connected components of $\Sigma$.    

\begin{theorem}\label{thm:characterize}
Let $(\Sigma, D)$ be a doodle diagram. The following conditions are equivalent. 
\begin{itemize} 
\item[$(1)$] $(\Sigma, D)$ is a minimal diagram. 
\item[$(2)$] For any diagram $(\Sigma', D')$ representing the same doodle as $(\Sigma, D)$, 
three inequalities 
$c(D) \leq c(D')$, $g(\Sigma) \leq g(\Sigma')$ and $
{\rm comp}(\Sigma) \geq {\rm comp}(\Sigma')$ 
hold. 
\end{itemize} 
\end{theorem}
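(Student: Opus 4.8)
The plan is to read the statement off the fact that $\cal D$ is a proof reduction graph (Theorem~\ref{thm:proofreductiongraph}): every vertex of a path component descends to the \emph{unique} root of that component, and a minimal diagram is precisely such a root (as observed in the proof of Theorem~\ref{thm:minimal}). What remains is a bookkeeping analysis of how the three quantities $c(D)$, $g(\Sigma)$ and ${\rm comp}(\Sigma)$ vary along a descending path.

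First I would record the effect of the three collapsing moves. An $H_1^{-1}$ move lowers $c$ by $1$ and leaves $\Sigma$ unchanged; an $H_2^{-1}$ move lowers $c$ by $2$ and leaves $\Sigma$ unchanged; an $h^{-1}$ move leaves $c$ unchanged and, according to whether the belt curve is non-separating or separating in its component, either lowers $g(\Sigma)$ by $1$ keeping ${\rm comp}(\Sigma)$ fixed, or raises ${\rm comp}(\Sigma)$ by $1$ keeping $g(\Sigma)$ fixed. (Here I use the convention that $g(\Sigma)$ of a disconnected surface is the sum of the genera of its components, so that the level $c(D)-\chi(\Sigma)$ is consistent with this, and I invoke admissibility to exclude the handle elimination that would produce a component disjoint from $D$.) Consequently, along \emph{any} descending path $c(D)$ is non-increasing, $g(\Sigma)$ is non-increasing and ${\rm comp}(\Sigma)$ is non-decreasing; moreover every individual collapsing move strictly changes at least one of these three quantities in the respective direction.

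For $(1)\Rightarrow(2)$: if $(\Sigma,D)$ is minimal it is the unique root of its path component, so any $(\Sigma',D')$ representing the same doodle lies in that component and descends to it by a descending path; the monotonicity above then gives $c(D)\le c(D')$, $g(\Sigma)\le g(\Sigma')$ and ${\rm comp}(\Sigma)\ge{\rm comp}(\Sigma')$. For $(2)\Rightarrow(1)$: let $(\Sigma_0,D_0)$ be the unique minimal diagram of the doodle. Applying $(1)\Rightarrow(2)$ to $(\Sigma_0,D_0)$ and hypothesis $(2)$ to the pair $\big((\Sigma,D),(\Sigma_0,D_0)\big)$ gives each of the three inequalities in both directions, hence $c(D)=c(D_0)$, $g(\Sigma)=g(\Sigma_0)$ and ${\rm comp}(\Sigma)={\rm comp}(\Sigma_0)$. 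Since $(\Sigma,D)$ descends to $(\Sigma_0,D_0)$, all three quantities are constant along that descending path; but every collapsing move strictly changes one of them, so the path has length zero, i.e.\ $(\Sigma,D)=(\Sigma_0,D_0)$ is minimal.

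The main obstacle is the precise analysis of the $h^{-1}$ move: fixing the convention for $g(\Sigma)$ and ${\rm comp}(\Sigma)$ on a possibly disconnected surface, and verifying that every admissible handle elimination either strictly lowers the total genus or strictly raises the component count, so that it can never hold all three of $c$, $g$, ${\rm comp}$ fixed simultaneously. Granting this, the theorem follows formally from the proof-reduction-graph structure already established.
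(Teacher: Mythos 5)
Your proposal is correct and follows essentially the same route as the paper: identify minimal diagrams with roots of the proof reduction graph $\cal D$, use the unique-root property to get a descending path from any other representative, and read off the three inequalities from the monotone effect of the collapsing moves. The paper states the $(2)\Rightarrow(1)$ direction is ``obvious''; your bookkeeping (each admissible collapsing move strictly decreases $c$ or $g$ or strictly increases ${\rm comp}$, forcing the descending path to the minimal diagram to be trivial) is exactly the intended justification.
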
  

\begin{proof}
Note that a doodle diagram is minimal if and only if it is a root of the graph $\cal D$ of doodle diagrams. 
(1) $\Rightarrow$ (2):  Suppose (1). 
By Theorem~\ref{thm:proofreductiongraph}, if $(\Sigma, D)$ and $(\Sigma', D')$ are not homeomorphic then there exists a 
descending path from  $(\Sigma', D')$ to $(\Sigma, D)$.  This implies the inequalities. 
(2) $\Rightarrow$ (1):  It is obvious.  
\end{proof}

The {\bf genus} of a doodle is the minimum genus among all surfaces on which the doodle has a representative.   
A doodle with genus $0$ is a planar doodle.   
By Theorem~\ref{thm:characterize} we have the following. 

\begin{corollary}\label{cor:genus}
The genus of a doodle is the genus of the surface of a (unique) minimal diagram of the doodle. 
\end{corollary}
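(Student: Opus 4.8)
The plan is to read this off directly from Theorem~\ref{thm:characterize}, once we have made sense of the words \emph{unique minimal diagram} and \emph{genus of a doodle}. First I would recall that every doodle does have a minimal diagram: by the FDPP half of Theorem~\ref{thm:proofreductiongraph} every vertex of the graph ${\cal D}$ descends to a root, and by the characterization of roots (a root is a diagram admitting no $H_1^{-1}, H_2^{-1}, h^{-1}$ move, i.e.\ exactly a minimal diagram), such a root is a minimal diagram representing the given doodle; its uniqueness up to homeomorphism is Theorem~\ref{thm:minimal}. Hence ``the surface of a (unique) minimal diagram of the doodle'' is a well-defined closed oriented surface $\Sigma$, and the task reduces to showing $g(\Sigma)$ equals the genus of the doodle, namely $\min\{ g(\Sigma') \}$ taken over all closed oriented surfaces $\Sigma'$ carrying a representative of the doodle.

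Next I would observe that this minimum is unchanged if we restrict to \emph{diagrams} (regular representatives) rather than arbitrary representatives: any representative $f\colon \coprod_i S^1_i \to \Sigma'$ can be made regular by an arbitrarily small homotopy, and this homotopy takes place inside $\Sigma'$, so it does not change the ambient surface. Therefore the genus of the doodle equals $\min\{ g(\Sigma') : (\Sigma', D') \text{ is a diagram representing the doodle} \}$.

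With these two preliminary remarks the conclusion is immediate. On one side, $(\Sigma, D)$ is itself a diagram representing the doodle, so the above minimum is at most $g(\Sigma)$. On the other side, applying Theorem~\ref{thm:characterize} to the minimal diagram $(\Sigma, D)$ yields $g(\Sigma) \le g(\Sigma')$ for every diagram $(\Sigma', D')$ representing the same doodle; thus $g(\Sigma)$ is a lower bound for that minimum. Combining the two inequalities gives that the genus of the doodle equals $g(\Sigma)$, as claimed. When the minimal diagram has several components I would note that $g$ here means the total genus of the (possibly disconnected) closed oriented surface, which is exactly the quantity $g(\Sigma)$ already appearing in Theorem~\ref{thm:characterize}, so no extra bookkeeping is needed.

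I do not expect any serious obstacle: all of the substance is carried by Theorem~\ref{thm:characterize}, and the corollary is essentially a restatement of its genus inequality together with the existence and uniqueness of the root. The only points deserving an explicit sentence are the reduction from representatives to diagrams in the definition of the genus of a doodle (so that the infimum defining the genus is literally attained by a diagram), and the fact — already available — that a minimal diagram exists and is unique, so that the statement is meaningful in the first place.
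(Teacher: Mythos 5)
Your proposal is correct and follows the paper's route exactly: the paper derives this corollary directly from Theorem~\ref{thm:characterize} (prefacing it only with ``By Theorem~\ref{thm:characterize} we have the following''), and your argument is just that deduction with the routine supporting facts (existence and uniqueness of the minimal diagram, and the reduction from arbitrary representatives to regular diagrams) spelled out. Nothing is missing and nothing differs in substance.
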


\section{Examples of Minimal Doodle Diagrams} \label{sect:example}

In this section we will look at minimal diagrams of doodles. Here we consider unoriented doodles. 

\subsection{Cell Decompositions of Surfaces from Minimal Doodle Diagrams}

Let $D$ be a non-trivial minimal doodle diagram on a connected surface of genus $g$. Then the crossings, edges and regions form a cell decomposition of the ambient surface. Let $V$ be the number of crossings, $E$ the number of edges and $F$ the number of regions. Let $F_i$ be the number of $i$-gon regions, $i=3,4,\ldots$.
\begin{lemma}With the notation above,
\begin{itemize}
\item [$(I_1)$] $E=2V$
\item [$(I_2)$] $F=V+2-2g$
\item [$(I_3)$] $V-6+6g=F_4+2F_5+3F_6+\cdots$
\end{itemize}
\end{lemma}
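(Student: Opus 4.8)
The plan is to treat the non-trivial minimal doodle diagram as a genuine CW-structure on the closed oriented genus-$g$ surface $\Sigma$ and extract all three identities from Euler's formula plus a simple edge–face incidence count. First I would justify that we actually have a cell decomposition: minimality says the interior of every region is simply connected, so each region is an open disc; since the diagram is non-trivial and the surface connected, there is at least one crossing, so $V \geq 1$, and the $1$-skeleton (crossings and edges) is a genuine graph embedded in $\Sigma$ with $2$-cells the regions. The edges are the arcs of $D$ between consecutive crossings, and the regions are the complementary discs.

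For $(I_1)$: each crossing of a doodle diagram is a transverse double point, so exactly four edge-ends emanate from each crossing. Summing edge-ends over crossings gives $4V$, and since each edge has two ends this equals $2E$, so $E = 2V$. (One should note that a minimal non-trivial diagram has no closed loop without crossings on it as a separate component and no monogons, so every edge genuinely has two distinct ends at crossings; I would remark on this to be safe, though it follows from non-triviality and minimality.) For $(I_2)$: apply the Euler characteristic formula $V - E + F = \chi(\Sigma) = 2 - 2g$ for a connected closed oriented surface of genus $g$, and substitute $E = 2V$ to get $V - 2V + F = 2 - 2g$, i.e. $F = V + 2 - 2g$.

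For $(I_3)$: count edge–region incidences. Since each region is an $i$-gon for some $i \geq 3$ (there are no monogons or bigons by minimality, and no $0$-gons since the diagram is non-trivial so there is no region that is all of $\Sigma$ minus a point — actually I'd phrase this as: every region is bounded by at least $3$ edges), we have $F = \sum_{i \geq 3} F_i$. Counting each edge as bordering two region-sides (each edge lies on the boundary of regions on both its sides, possibly the same region twice) gives $2E = \sum_{i\geq 3} i\, F_i$. Now substitute $E = 2V$ so $\sum_{i\geq 3} i F_i = 4V$, and combine with $F = V + 2 - 2g = \sum_{i \geq 3} F_i$. Compute $\sum_{i\geq 3} i F_i - 3\sum_{i \geq 3} F_i = 4V - 3(V + 2 - 2g) = V - 6 + 6g$, and the left side is $\sum_{i \geq 3}(i-3)F_i = F_4 + 2F_5 + 3F_6 + \cdots$, which is exactly $(I_3)$.

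The routine calculations are all trivial; the only real point requiring care — and the step I'd expect to need the most attention — is the verification that minimality genuinely yields a CW-decomposition with every face a disc and every face an $i$-gon with $i \geq 3$. In particular one must rule out the degenerate possibilities that minimality is precisely designed to exclude: a region that is not simply connected (excluded by hypothesis), a monogon or bigon (excluded by hypothesis), and the case $V = 0$ (excluded by non-triviality, since a minimal diagram with no crossings on a connected surface would be a trivial doodle diagram with one component, i.e.\ on $S^2$, or would have a non-disc region). I would state these observations explicitly before invoking the counting arguments, after which $(I_1)$–$(I_3)$ drop out immediately.
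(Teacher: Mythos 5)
Your proof is correct and follows essentially the same route as the paper: the handshake count $4V=2E$ for $(I_1)$, the Euler--Poincar\'e formula with that substitution for $(I_2)$, and the difference of $2E=\sum_i iF_i$ and $3F=\sum_i 3F_i$ for $(I_3)$. The extra care you take in justifying that minimality and non-triviality give a genuine cell decomposition with all faces $i$-gons for $i\geq 3$ is a point the paper simply asserts, and is a reasonable addition.
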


\begin{proof} 
Every vertex contributes $4$ to the number of edges twice over. This proves identity $I_1$.
The formula for the Euler-Poincar\'e number is $V-E+F=2-2g$.  Substituting $I_1$ gives $I_2$.
Each $i$-gon contributes $i$ edges to $E$. Each edge is the edge of $2$ regions. So
$4V=2E=3F_3+4F_4+\cdots$ But $6+3V-6g=3F=3F_3+3F_4+\cdots$. Taking the difference implies $I_3$.
\end{proof}

\begin{theorem}
Consider non-trivial minimal doodle diagrams on the $2$-sphere.
\begin{itemize}
\item[$(1)$] They  have at least 6 crossings.
\item[$(2)$] There is only one with 6 crossings: the Borromean doodle.
\item[$(3)$] There are none with 7 crossings.
\item[$(4)$] There is only one with 8 crossings: the poppy.
\end{itemize}
\end{theorem}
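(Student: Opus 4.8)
The plan is to exploit the three combinatorial identities $(I_1)$, $(I_2)$, $(I_3)$ from the preceding lemma together with the fact that a non-trivial minimal doodle diagram on $S^2$ has no monogons, no bigons, and every region simply connected, so every face is an $i$-gon with $i \geq 3$. Setting $g=0$ in $(I_3)$ gives $V - 6 = F_4 + 2F_5 + 3F_6 + \cdots \geq 0$, which immediately yields $(1)$: $V \geq 6$. Moreover, equality $V = 6$ forces $F_4 = F_5 = \cdots = 0$, so all regions are triangles; then $(I_1)$ gives $E = 12$, and $(I_2)$ gives $F = 8$, i.e. the diagram is a triangulation-like $4$-valent cell complex on $S^2$ with $6$ vertices, $12$ edges, $8$ triangular faces. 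The first main task is to show there is exactly one such doodle diagram, and to identify it as the Borromean doodle. I would argue that the underlying $4$-valent planar graph with all faces triangles and $6$ vertices is unique (it is the octahedron, the $1$-skeleton of which is the unique $4$-regular planar graph on $6$ vertices with all faces triangular), and then check that, up to the symmetries of the octahedron, there is a unique way to resolve each $4$-valent vertex into a transverse double point of immersed circles subject to the no-bigon / no-monogon constraint; tracing the resulting curves shows they form three circles arranged as the Borromean doodle. One must also verify this diagram is genuinely minimal (which follows since it has no monogons, bigons, and all regions are discs) and that it is not a trivial doodle diagram (it has crossings), so by Theorem~\ref{thm:minimal} it represents a well-defined non-trivial doodle.

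For $(3)$, set $V = 7$ in $(I_3)$ with $g = 0$: $1 = F_4 + 2F_5 + 3F_6 + \cdots$, which forces $F_4 = 1$ and all other $F_i = 0$ for $i \geq 5$. Thus such a diagram would have exactly one quadrilateral region and $F - 1 = F_3$ triangular regions; from $(I_1)$, $E = 14$, and from $(I_2)$, $F = 9$, so $F_3 = 8$. I would then derive a contradiction by a local/parity argument on this putative $4$-valent cell complex on $S^2$ with one square and eight triangles: for instance, by a case analysis on the neighbourhood of the unique square, or by a coloring/counting argument (e.g. analyzing how edges of the square are shared with triangles and propagating constraints until no consistent completion exists), or by appealing to the classification of small $4$-valent spherical maps. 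Showing no such configuration exists is where I expect the argument to require the most care.

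For $(4)$, set $V = 8$, $g = 0$ in $(I_3)$: $2 = F_4 + 2F_5 + 3F_6 + \cdots$, so either $F_5 = 1$ (all other higher $F_i = 0$, $F_4 = 0$) or $F_4 = 2$ (all other higher $F_i = 0$). From $(I_1)$, $E = 16$; from $(I_2)$, $F = 10$. In the first subcase we would have one pentagon and nine triangles; in the second, two quadrilaterals and eight triangles. I would rule out the one-pentagon case by an argument analogous to the one used for $V=7$, and then show that in the two-quadrilateral case the $4$-valent cell complex is unique up to homeomorphism, and that its unique valid resolution into immersed circles (again respecting the no-bigon, no-monogon condition) yields precisely the poppy; as with $V=6$ one then checks minimality and non-triviality and invokes Theorem~\ref{thm:minimal} to conclude it defines the poppy doodle.

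The main obstacle throughout is the passage from the numerical face-vector data to an actual classification of $4$-valent cell decompositions of $S^2$ and then to doodle diagrams: the Euler-characteristic identities pin down the face vector but not the combinatorial map, so the real work is (a) enumerating the finitely many $4$-valent spherical maps with the prescribed small face vectors, which may need an ad hoc but finite case analysis, and (b) for each map, determining the ways to split the $4$-valent vertices into transverse crossings of embedded-away-from-crossings circles and discarding those that create a bigon or monogon (which would contradict minimality). I expect the bigon/monogon exclusion to be the key pruning tool that collapses the case analysis, and the identification of the surviving diagrams with the named doodles (Borromean for $V=6$, poppy for $V=8$) to be a direct inspection once the map is fixed. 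A clean way to organize (b) is to observe that at each crossing the four incident edge-germs are paired into two strands, and that a bigon arises exactly when two regions meeting along two edges get their shared vertices resolved "compatibly"; phrasing the no-bigon condition as a constraint on these pairings at adjacent vertices makes the finiteness of the search transparent.
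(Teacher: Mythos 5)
Your numerical reductions are correct and coincide with the paper's: setting $g=0$ in $(I_3)$ gives $(1)$ immediately, $V=6$ forces an all-triangle decomposition with $(V,E,F)=(6,12,8)$, $V=7$ forces exactly one tetragon and eight triangles, and $V=8$ forces either one pentagon or two tetragons. Part $(2)$ is handled at essentially the same level of detail as the paper (which simply asserts that the all-triangle case is the octahedral decomposition, hence the Borromean doodle). The difficulty is that for $(3)$ and $(4)$ you stop at the face-vector stage and only list candidate strategies (``a case analysis on the neighbourhood of the unique square, or a coloring/counting argument, or the classification of small $4$-valent spherical maps'') without executing any of them. Since the face-vector computation is already contained in the preceding lemma, the case analysis \emph{is} the proof of $(3)$ and $(4)$, and it is missing.

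The concrete idea you need, and which the paper supplies, is this: because each vertex is a transverse double point, two \emph{successive} edges of the tetragonal region are consecutive in the cyclic order at their common vertex and therefore bound only the tetragon itself; hence no triangle adjacent to the tetragon along an edge $v_iv_{i+1}$ can use a second edge of the tetragon, and its apex $w_i$ must lie in the set $\{u_0,u_1,u_2\}$ of the three remaining vertices. Pigeonhole on four triangles versus three apices forces two opposite triangles to share their apex ($w_0=w_2=x$), and one then checks directly (the paper's Figure~\ref{fig22nominimal}) that the two remaining vertices $a$ and $b$ each still require two further incident edges that cannot be drawn in the resulting planar configuration --- this is the contradiction for $(3)$. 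For $(4)$ the same argument eliminates the pentagon subcase, and in the two-tetragon subcase one splits according to whether the tetragons are disjoint (yielding the poppy) or share one or two vertices (yielding a contradiction by the same kind of local analysis). Your closing remark about encoding the no-bigon condition as a pairing constraint at adjacent vertices points in the right direction, but without the ``successive edges of a region bound no second common region'' observation and the pigeonhole step, the finite search you describe is not actually carried out, so parts $(3)$ and $(4)$ remain unproved in your write-up.
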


\begin{proof} 
(1) It follows immediately from identity $I_3$.  

(2) If $V=6$ then $I_3$ implies that there are only triangular regions. These form the octahedral decomposition of the 2-sphere. 

(3) 
If $V=7$ then $I_3$ implies that there exists a single tetragonal region and other regions are all triangular regions.  
Let $\{v_0, v_1, v_2, v_3\}$ be the vertices (crossings) of the tetragonal region, and 
$\{u_0, u_1, u_2\}$ the other three vertices.  
Note that two successive edges among the four edges $v_i v_{i+1}$ $(i=0, 1, 2, 3)$ bounding the tetragonal region never bound a triangular region.  
Thus for each edge $v_i v_{i+1}$ $(i=0, 1, 2, 3)$ of tetragonal region, there is a 
vertex $w_i$ $(i=0, 1, 2, 3)$ $\in \{u_0, u_1, u_2\}$ such that $v_i v_{i+1} w_i$ is a 
triangular region.  
Without loss of generality, we may assume that $w_0 = w_2=x$, 
$w_1=a$ and $w_3=b$ as in Figure~\ref{fig22nominimal}, where the  tetragonal region is outer-most. 
Two edges which connects to the crossing $a$ (or $b$) are not depicted in this figure.
It is impossible to draw such edges under this circumstance. 
 

    \begin{figure}[h]
    \centerline{\epsfig{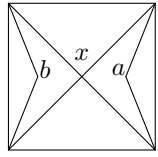}}
    \vspace*{8pt}
    \caption{No Minimal Planar Diagram with 7 Crossings}\label{fig22nominimal}
    \end{figure}

(4) $I_3$ implies that either 
(4-1) $F_4=2$ and $F_i=0$ for $i>4$ or 
(4-2) $F_5=1$ and $F_i=0$ for $i=4$ and for $i >5$. 
By a similar argument with (3) we see that case (4-1) does not occur.   
In case (4-1), there are two tetragonal regions.  If they are disjoint, then we have the poppy.  If they share one or two vertices, we have a contradiction.
\end{proof}

 \subsection{Infinite Sequences of Planar Doodles}
 Because we can recognize non-trivial doodles without $1$-gons or $2$-gons it is easy to invent sequences of different doodles. Here we define some sequences of planar doodles by giving sequences of minimal planar diagrams 
 whose early members have other interpretations.

\begin{example}{\rm 
There is a sequence of doodles $B_3, B_4,\ldots$ starting with the Borromean doodle, $B_3$ and the poppy, $B_4$.   Let the vertices of the two concentric $n$-gons be $X_1X_2\ldots X_n$ and $Y_1Y_2\ldots Y_n$. Construct the squares $X_iX_{i+1}Y_{i+1}Y_i$, $i=1,\ldots, n$ cyclically mod $n$. Join the diagonals $X_i$ to $Y_{i+1}$ sequentially to create the triangles.
This defines $B_n$. It has $2n$ vertices, $2n$ triangular faces and 2 $n$-gon faces. The number of components is $3$ if $n$ is divisible by $3$ and $1$ otherwise. We call these doodles the {\bf generalized Borromean} doodles.
}\end{example} 

\begin{example}{\rm 
Another two sequences can be defined by taking $2$ concentric $n$-gons separated by a concentric $2n$-gon and filling in the annular regions with alternate squares and triangles. This can be done in two ways so that at the $2n$-gon each square or triangle in one annulus abutts a single square or triangle in the other annulus. So taking nomenclature from the classification of polyhedra, we have {\bf Gyro}, $C'_n$, which has the squares abutted by a common edge to the triangles whilst {\bf Ortho}, $C''_n$, has the squares abutted by a common edge to the squares and the triangles abutted to the triangles. Both $C'_n$ and $C''_n$ have $4n$ vertices, $F_n=2$ and $F_3=F_4=2n$. The doodles  $C'_n$ and $C''_n$ for $n>3$ can be distinguished by the number of components. For $n=3$ both $C'_3$ and $C''_3$ have $4$ components and a similar number of triangular and square regions. They are illustrated in Figure~\ref{fig23CC} and can be distinguished by the combinatorics of their regions.

    \begin{figure}[h]
    \centerline{\epsfig{file=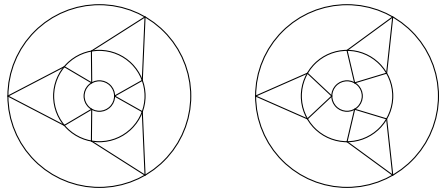, height=2.5cm}}
    \vspace*{8pt}
    \caption{$C'_3$ and $C''_3$}\label{fig23CC}
    \end{figure}

}\end{example} 

We can describe $C'_3$ and $C''_3$ as follows. Consider the Borromean doodle, $B_3$, with circular components (see Figure~\ref{fig01Poppyfig02Borromean}). Now draw a circle separating the innermost triangular region from the outermost  triangular region. This describes $C'_3$.  To obtain $C''_3$, perform an $R_3$ move on the innermost triangular region. 
See Figure~\ref{fig24CCcircular}. 

    \begin{figure}[h]
    \centerline{\epsfig{file=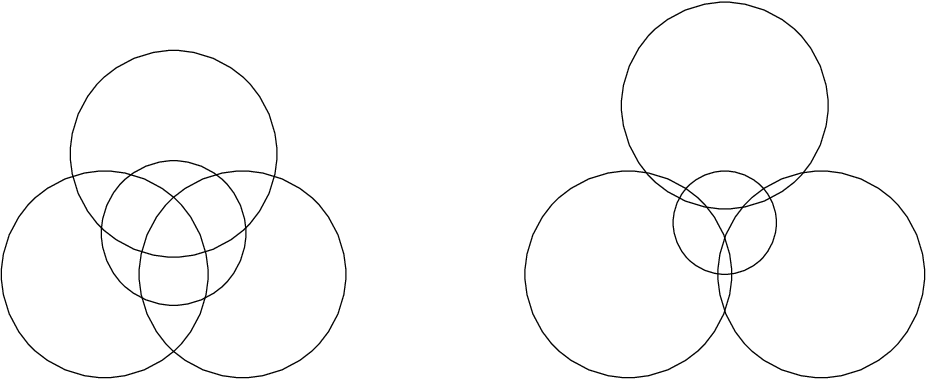, height=2.5cm}}
    \vspace*{8pt}
    \caption{$C'_3$ and $C''_3$ with Circular Components}\label{fig24CCcircular}
    \end{figure}

If we remove any component of $C'_3$ we get the Borromean doodle. On the
other hand, if we remove the innermost circle from $C''_3$ we get the trivial doodle. This is another proof that  $C'_3$ and $C''_3$ are distinct.



\begin{lemma} 
\begin{itemize}
\item[$(1)$] The doodle $C'_n$ has four components if $n$ is divisible by 3. Otherwise $C'_n$ has two components.
\item[$(2)$] The doodle $C''_n$ has $n+1$ components.
\end{itemize}
\end{lemma}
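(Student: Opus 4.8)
The plan is to count components directly from the given cell decompositions of $C'_n$ and $C''_n$, using the fact that the number of components of a doodle equals that of any of its diagrams. In a doodle diagram every crossing is a transverse double point, so each immersed circle passes \emph{straight through} each crossing: among the four edges incident to a crossing, the circles join the two pairs of cyclically opposite edges. Hence the component structure is determined once one knows, at every vertex, the cyclic order of its four edges. So the first step is bookkeeping: label the inner $n$-gon $X_1,\dots,X_n$, the central $2n$-gon $Z_1,\dots,Z_{2n}$, and the outer $n$-gon $Y_1,\dots,Y_n$, and record the radial edges. The inner annulus (squares on the edges $X_iX_{i+1}$, alternating with triangles of apex $X_i$) is the same for both doodles, giving $X_i$ incident to $Z_{2i-2}$ and $Z_{2i-1}$; the whole Gyro/Ortho difference is a one-step shift in how the outer annulus meets the $2n$-gon, which I record as: $Y_i$ is incident to $Z_{2i-1},Z_{2i}$ for $C'_n$, but to $Z_{2i-2},Z_{2i-1}$ for $C''_n$.

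The key local fact is at the $2n$-gon vertices: at each $Z_j$ the two $2n$-gon edges are cyclically opposite (the inner annulus fills one complementary sector, the outer annulus the other), so one immersed circle runs straight along the $2n$-gon and the other passes radially through $Z_j$, joining its inward radial edge to its outward radial edge. Therefore the $2n$-gon is a single component --- the \emph{$Z$-cycle} --- of each of $C'_n$ and $C''_n$, and every other component avoids the $2n$-gon, being built only from $n$-gon edges and radial edges. Computing the cyclic order at $X_i$ and at $Y_i$ (each being the apex of one triangle, flanked by two squares and by the interior of the central, resp.\ outer, $n$-gon) yields the pairings there, after which one simply traces the non-$Z$ strands.

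For $C'_n$, tracing a strand that enters along $X_{i-1}X_i$ gives $X_i\to Z_{2i-1}\to Y_i\to Y_{i+1}\to Z_{2i+2}\to X_{i+2}$, re-entering an inner edge at $X_{i+2}X_{i+3}$; so the index advances by $3$ per period, each non-$Z$ strand uses $n/\gcd(3,n)$ of the $n$ inner edges, there are $\gcd(3,n)$ of them, and $C'_n$ has $1+\gcd(3,n)$ components, i.e.\ $4$ if $3\mid n$ and $2$ otherwise. For $C''_n$, the one-step shift turns the trace into $X_i\to Z_{2i-1}\to Y_i\to Y_{i-1}\to Z_{2i-4}\to X_{i-1}$, which closes up on the \emph{same} inner edge $X_{i-1}X_i$ after six edges; hence each non-$Z$ strand is a hexagon meeting exactly one inner edge, there are $n$ of them, and $C''_n$ has $n+1$ components. (One also checks quickly that no non-$Z$ strand can miss the inner $n$-gon altogether, since at each $Y_i$ every outer-polygon edge is paired with a radial edge, so a strand cannot remain on the outer $n$-gon.)

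The only real obstacle is the first step together with the cyclic-order accounting: one must pin down the radial-edge index conventions precisely and, above all, correctly isolate the single place where $C'_n$ and $C''_n$ differ. Once the incidence data and the four cyclic orders (at $X_i$, at $Y_i$, and at the two kinds of $Z_j$) are fixed, the last step is just following arrows. As sanity checks: specialising to $n=3$ gives $4$ components for both $C'_3$ and $C''_3$, matching the remarks after the construction; and deleting the $Z$-cycle from $C'_n$ leaves exactly the strand pattern of the generalized Borromean doodle $B_n$, consistent with its stated component count.
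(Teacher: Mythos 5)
Your proposal is correct and follows essentially the same route as the paper: both single out the central $2n$-gon as one component and then trace the remaining strands, finding that the Gyro pattern shifts the inner-edge index by $3$ (giving $\gcd(3,n)$ extra components) while the Ortho pattern closes each strand into a hexagon (giving $n$ extra components). Your version merely makes explicit the cyclic-order bookkeeping at the vertices that the paper's two-line proof leaves implicit.
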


\begin{proof} 
Firstly note that the central 2$n$-gon is one of the components in both cases. 
(1)  Let the vertices around one of the $n$-gons be  $P_1\ldots P_{n}$. A component of $C'_n$ containing the edge $P_iP_{i+1}$, contains the edge $P_{i+3}P_{i+4}$. 
(2)  For $C''_n$  each component which isn't the central 2$n$-gon is a hexagon containing the edge $P_iP_{i+1}$, $i=1,\ldots n$ mod $n$, and there are $n$ of these.
\end{proof}



\begin{remark}{\rm 
{\bf (A Note on Planar Doodles and Polyhdra)}
There is a bijection between minimal planar doodles and the 1-skeleta of 3-dimensional polehdra whose vertices have valency four. It is well known that the Borromean doodle, $B_3$  is the 1-skeleton of the octahedron. In general $B_n$ is  the 1-skeleton of the $n$-gon antiprism,
see \cite{C} for definitions.

Furthermore $C'_3$ is the 1-skeleton of the cuboctahedron and $C''_3$ is the 1-skeleton of the anticuboctahedron or triangular orthobicupola which is Johnson's solid $J_{27}$. 
$C'_4$/$C'_5$ are also 1-skeleta of Johnson solids: $J_{29}$/$J_{31}$.  $C''_4$/ $C''_5$  are the 1-skeleta of the square gyrobicupola/pentagonal gyrobicupola  and $J_{28}$/$J_{30}$ are the 1-skeleta of the square orthobicupola/pentagonal  orthobicupola. 
Further bicupola for $n>5$ can not be realised with regular faces.

We are grateful to Peter Cromwell for this information. 
}\end{remark}

\subsection{The $\pm 1$ Construction}
Let $D$ be a doodle diagram on a surface $\Sigma$. Suppose that $D$ has a region $R$ with at least four edges and chose two disjoint edges $e_1$ and $e_2$ of $R$. Remove the interiors of the edges and join the dangling vertices with two diagonal arcs meeting at a new point $X$ in the interior of $R$. This creates a new doodle diagram $D'$. We write $D{+1\atop\to}D'$ or $D'{-1\atop\to}D$ and call $D$ an {\bf ancestor} of $D'$ and $D'$ a {\bf descendant} of $D$. The number of components of $D'$ changes from that of 
 $D$ by $0$ or $\pm 1$, 
 depending how  $e_1$ and $e_2$  are oriented and placed.

\begin{lemma}
Let $D'$ have an ancestor $D$.   Then $D'$ is minimal if and only if $D$ is minimal.
\end{lemma}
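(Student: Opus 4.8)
The plan is to track how the $\pm1$ move changes the regions of a diagram, and then to read the defining conditions of minimality — every region simply connected, no monogons, no bigons — directly off this local picture. Write $R$ for the region of $D$ with at least four edges used in the construction, $e_1=P_1P_2$ and $e_2=P_3P_4$ for the two chosen disjoint edges (so $P_1,P_2,P_3,P_4$ are four distinct crossings), $X$ for the new crossing, and $R_1,R_2$ for the regions of $D$ on the far sides of $e_1,e_2$. Assuming for the moment that $R,R_1,R_2$ are distinct and that $R$ is a disc, the only regions that change are these three: the two diagonal arcs through $X$ cut $R$ into four pieces; the two ``corner'' pieces at $e_1$ and $e_2$ are glued onto $R_1$ and $R_2$ along those edges, enlarging them to regions $R_i'$ homeomorphic to $R_i$ with exactly one extra boundary edge, $|\partial R_i'|=|\partial R_i|+1$; and the two remaining pieces $T_1,T_2$ are discs with at least three edges each (two edges $XP_j$ plus the $\ge1$ edges of $\partial R$ between the relevant $P_j$'s). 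Every other region of $D$ is literally a region of $D'$, and $c(D')=c(D)+1$ on the same surface. When two of $R,R_1,R_2$ coincide, or $R$ is not a disc, one makes the obvious modification; I will want to record that the surgery still only attaches discs to regions along boundary arcs.

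For the implication $D$ minimal $\Rightarrow D'$ minimal, I would check that no region of $D'$ is a monogon, a bigon, or non-simply-connected. A region of $D'$ other than $T_1,T_2,R_1',R_2'$ is a region of $D$, so it is fine by hypothesis; $T_1,T_2$ are discs with $\ge3$ edges; and $R_i'$ is a disc (a disc with a triangle glued on along a boundary arc) with $|\partial R_i'|=|\partial R_i|+1\ge4$. Hence $D'$ is minimal.

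For the converse I would argue the contrapositive: assuming $D$ is not minimal, produce an $H_1^-$, $H_2^-$, or $h^-$ move on $D'$. If the offending feature of $D$ — a monogon, a bigon, or a non-simply-connected region — is a region other than $R,R_1,R_2$, it recurs verbatim in $D'$ and we are done. Since $R$ has $\ge4$ edges it is never a monogon or bigon; and a monogon is bounded by a single loop edge, whereas $\partial R_i$ contains the non-loop edge $e_i$, so $R_i$ is never a monogon. This leaves three cases: (i) $R$ is not a disc; (ii) some $R_i$ is not a disc; (iii) some $R_i$ is a bigon. For (i) and (ii), the identity $\chi(\Sigma)=V-E+\sum_{\text{regions}}\chi$ (with $V,E$ the numbers of crossings and edges) is preserved by the move while $T_1,T_2$ and $R_1',R_2'$ are discs, so a region of $D$ with non-positive Euler characteristic forces a non-simply-connected region of $D'$, hence an $h^-$ move — for planar $D$ this is just the bookkeeping behind $(I_1)$--$(I_3)$.

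Case (iii) is the delicate one, and I expect it to be the main obstacle: when $R_i$ is a bigon, $R_i'$ is only a triangle of $D'$, so the triangle itself gives no move, and one must instead rule out the local configuration ``a bigon lying across an edge of a $\ge4$-gon'' inside a diagram that is otherwise minimal. I would attack this by examining the strands through $P_i$ and $X$ together with the cyclic orders and face counts at these crossings, reducing — in the planar case — to a combinatorial claim about minimal diagrams that should follow from the relations $(I_1)$--$(I_3)$ and the classification of small minimal diagrams in Section~\ref{sect:example}. Making this last step go through is the crux; the remainder is routine bookkeeping with the region correspondence set up above.
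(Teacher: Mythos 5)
The paper states this lemma without proof, so there is nothing in the text to compare against; judging your argument on its own terms: the region bookkeeping is right, the forward implication ($D$ minimal $\Rightarrow D'$ minimal) is correctly proved, and your reduction of the converse to cases (i)--(iii), with the Euler-characteristic count $\chi(T_1)+\chi(T_2)+\chi(R_1')+\chi(R_2')=\chi(R)+\chi(R_1)+\chi(R_2)+1$ disposing of the non-disc cases, is also correct (modulo the degenerate coincidences among $R,R_1,R_2$ that you only wave at).

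However, the gap you flag in case (iii) is not a technicality to be finessed: it is exactly where the statement fails, so no combinatorial argument from $(I_1)$--$(I_3)$ can close it. Take $D'=B_3$, the Borromean/octahedral diagram, with a crossing $X$ whose four incident triangles are $XAB$, $XBC$, $XCD$, $XDA$. Smooth $X$ by joining the $A$-end to the $B$-end and the $C$-end to the $D$-end. The diagonal pair $XBC$, $XDA$ merges (together with the two cut-off corners) into a single square region $R$ with edge set $\{BC,\,e_2,\,DA,\,e_1\}$, where $e_1$ runs from $A$ to $B$ and $e_2$ from $C$ to $D$; the other diagonal pair $XAB$, $XCD$ each lose their corner at $X$ and become bigons. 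The result $D$ is a legitimate $5$-crossing doodle diagram on $S^2$ ($V-E+F=5-10+7=2$), $R$ is a $4$-gon, $e_1$ and $e_2$ are disjoint edges of $R$, and the $+1$ construction applied to $(R,e_1,e_2)$ returns $B_3$. So the minimal diagram $B_3$ has a non-minimal ancestor: your case (iii) realized. Hence only the implication you actually proved holds in general, together with the partial converse that $D'$ minimal forces every region of $D$ to be a disc and forbids monogons; the biconditional requires either restricting ``ancestor'' to minimal $D$ or excluding the configuration in which $R_1$ or $R_2$ is a bigon. (This is worth recording, since the paper's subsequent recognition lemma uses ``Since $D$ is minimal, $p>2$'' and the fundamentality of the generalized Borromean doodles depends on it.)
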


Figure~\ref{fig25descendantsPoppy}  illustrates how the poppy is the ancestor of (the unique) minimal diagram with 9 crossings and (the unique) two component minimal diagram with 10 crossings.

    \begin{figure}[h]
    \centerline{\epsfig{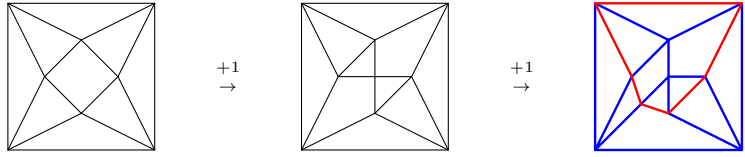}}
    \vspace*{8pt}
    \caption{Descendants of the Poppy}\label{fig25descendantsPoppy}
    \end{figure}

\begin{definition}{\rm 
A diagram is {\bf fundamental} if it is a minimal diagram without ancestors.  
}\end{definition}

The following lemma gives a procedure for recognising whether a diagram is fundamental or not. 

\begin{lemma}
If a minimal diagram has an ancestor, then there is a crossing $X$ satisfying one of the following.
\begin{itemize}
\item[$(1)$] There are two distinct regions appearing in a diagonal position about $X$ such that they are a $p'$-gon and a $q'$-gon for some $p' >3$ and $q' >3$. 
\item[$(2)$] There is a region appearing in a diagonal position about $X$ such that it is an $r'$-gon for some $r' >5$. 
\end{itemize}
\end{lemma}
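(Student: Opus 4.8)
The plan is to characterize what the "$-1$" move does to the local combinatorial picture around the new crossing $X$ of the descendant $D'$, and then show that the obstruction to being fundamental can always be read off at a single crossing. Recall that if $D' {-1 \atop \to} D$, then $D$ is obtained from $D'$ by smoothing the crossing $X$ in one of its two ways, replacing the two short arcs near $X$ by two disjoint edges $e_1, e_2$ of a region $R$ of $D$ with at least four edges. So the first step is to look at $X$ in $D'$: it has four corners, and the two corners that get merged into the single region $R$ when we smooth are exactly the two regions in \emph{diagonal position} about $X$. Write $p$ and $q$ for the number of edges of those two diagonal regions in $D'$ (or $p$ for the single region, if the same region appears on both diagonal corners). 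The other two corners are the regions containing $e_1$ and $e_2$ respectively in a collar — they are unaffected topologically by the smoothing except that $e_1$ (resp. $e_2$) loses the subdivision point $X$, so in $D$ these two regions have the same number of edges as in $D'$ minus... actually they are not the ones that matter here; what matters is that $R$ in $D$ must have at least four edges.

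The key computation is the edge count of $R$. When we smooth $X$, merging the two diagonal regions of $D'$: in the generic case where they are distinct regions with $p$ and $q$ edges, the region $R$ of $D$ has $p + q - 4$ edges (the two regions share $X$ as a corner; merging at $X$ removes the two pairs of edges incident to $X$ from each and the two new diagonal arcs disappear — one must count carefully, tracking that each of $e_1,e_2$ is formed by concatenating one boundary edge from each side). For $R$ to have $\geq 4$ edges we need $p + q - 4 \geq 4$, i.e. $p + q \geq 8$, which forces $p > 3$ and $q > 3$ is \emph{not} immediate, but combined with the minimality of $D$ (hence $D'$, hence no bigons or monogons, so every region has $\geq 3$ edges) and a short case check one gets that either both $p, q > 3$ — giving conclusion (1) — or one of them equals $3$. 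If one diagonal region is a triangle, say $q = 3$, then $p \geq 5$, but a triangle at a corner of $X$ that gets merged is exactly the configuration that... leads to the other case. In the degenerate case where a single region $r$-gon appears on both diagonal corners of $X$, smoothing produces $R$ with $r - 6$ edges, so $r - 6 \geq 4$ forces $r \geq 10 > 5$, but actually the correct threshold coming out of the identification of edges is $r > 5$, giving conclusion (2).

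So the steps, in order, are: (i) unpack the definition of ancestor/descendant to see that "$D'$ has an ancestor" means precisely "some crossing $X$ of $D'$ can be smoothed so that the resulting region $R$ has $\geq 4$ edges and the two smoothed arcs came from two disjoint edges of $R$"; (ii) translate the condition "$R$ has $\geq 4$ edges" into an inequality on the number of edges of the region(s) of $D'$ appearing in diagonal position about $X$, carefully counting how edges merge under the smoothing — distinguishing the case of two distinct diagonal regions from the case of one region occupying both diagonal corners; (iii) use minimality of $D'$ (no monogons/bigons, so all regions are at least triangles, and regions are simply connected so the local picture is a genuine polygon-merge) to rule out the borderline possibilities and conclude that the inequality forces either two diagonal $p'$- and $q'$-gons with $p', q' > 3$, or a single diagonal $r'$-gon with $r' > 5$; (iv) observe conversely — though the lemma as stated is only the forward direction — that these conditions are exactly detectable by inspecting crossings, which is the "procedure" the preceding sentence advertises.

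The main obstacle I expect is step (ii): getting the edge-count bookkeeping exactly right, since when two adjacent polygonal regions are merged by deleting a shared vertex and reconnecting, the count of edges of the merged region depends on whether the two deleted "diagonal arcs" were themselves edges of $D$ or got absorbed into $e_1, e_2$, and one must be careful that the two disjoint edges $e_1, e_2$ of $R$ that the $+1$ move acts on are genuinely distinct and non-adjacent (this is where "diagonal" and "at least four edges" interact). A secondary subtlety is the degenerate configuration where the \emph{same} region of $D'$ shows up at two corners of $X$ — possibly at adjacent corners too, not just diagonal — and one has to check that such configurations, when they arise from a genuine $-1$ move, still land in case (1) or (2); here the simple-connectivity of regions guaranteed by minimality is what keeps the merge a simple polygon amalgamation rather than something that changes the surface, so invoking minimality at the right moment is essential.
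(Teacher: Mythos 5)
The central step of your argument is aimed at the wrong pair of corners at $X$, and as a result it cannot produce the conclusion of the lemma. Set up the $+1$ construction concretely: $e_1=ab$ and $e_2=cd$ are disjoint edges of $R$, and the two new arcs $aXc$ and $bXd$ cross at $X$. The four corners at $X$ then split into two diagonal pairs: the pair lying between the $b$- and $c$-ends and between the $d$- and $a$-ends (these are the two regions that merge back into $R$ when you smooth $X$), and the pair lying between the $a$- and $b$-ends and between the $c$- and $d$-ends (these are the regions that abut $e_1$ and $e_2$ from the side opposite $R$). You analyse the first pair --- ``the two corners that get merged into the single region $R$'' --- but that pair carries no useful lower bound: if $R$ is a square those two corners are triangles, so no inequality of the form ``$R$ has $\geq 4$ edges'' can force them to have more than $3$ edges. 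Indeed your own computation stalls at exactly this point (``which forces $p>3$ and $q>3$ is not immediate''), and your edge count is also off: merging two $p$- and $q$-gons across $X$ in this way yields $p+q-2$ edges, not $p+q-4$, so the inequality you would get is $p+q\geq 6$, which is automatic from minimality and proves nothing.

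The regions the lemma is actually about are the \emph{other} diagonal pair, which you explicitly set aside as ``not the ones that matter here.'' If $P$ and $Q$ are the regions of $D$ on the far side of $e_1$ and $e_2$ from $R$, then the $+1$ construction replaces the single edge $e_1$ on $\partial P$ by the two edges $aX$, $Xb$ (and likewise for $Q$), so $P$ and $Q$ become a $(p+1)$-gon and a $(q+1)$-gon sitting in diagonal position about $X$. Minimality of $D$ gives $p,q\geq 3$, hence $p+1,q+1>3$, which is conclusion $(1)$; and if $P=Q$ it is a single region whose boundary contains the two disjoint edges $e_1,e_2$, hence an $r$-gon with $r\geq 4$ that becomes an $(r+2)$-gon with $r+2>5$, which is conclusion $(2)$. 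This is a short, purely local count and needs none of the borderline case analysis you anticipate. To repair your write-up you should discard the ``merge into $R$'' computation entirely and redo step (ii) for the $e_1$-side and $e_2$-side regions.
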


\begin{proof} 
Let $D'$ be a descendant of $D$.  Let $e_1$ , $e_2$ and $R$ be the edges and the region of $D$ that were used to produce $D'$.  (1) Consider a case that the regions  containing $e_1$ and $e_2$, beside $R$, are distinct.  Suppose they are $p$-gon and $q$-gon. Since $D$ is minimal, $p>2$ and $q>2$. After applying the $+1$ construction, these regions become a $p'$-region and a $q'$-region with $p'=p+1$ and $q'= q+1$.  (2) Consider a case that the regions containing $e_1$ and $e_2$, beside $R$, are the same region of $D$.  Suppose it is an $r$-gon. Since 
 the boundary of this region contain $e_1$ and $e_2$, we have $r >3$.  By the $+1$ construction, this region becomes an $r'$-region with $r'=r+2$.
 \end{proof}

\begin{theorem}The generalised Borromean doodles are all fundamental. 
\end{theorem}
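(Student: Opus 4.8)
The plan is to invoke the previous lemma in contrapositive form: a minimal diagram is fundamental precisely when no crossing $X$ of it satisfies condition (1) or condition (2) of that lemma. Since each $B_n$ with $n\ge 3$ has only triangular faces together with two $n$-gon faces and is a cell decomposition of $S^2$, all of its regions are discs and none is a monogon or bigon, so $B_n$ is minimal; it remains to rule out (1) and (2) at every crossing.

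First I would introduce coordinates. Write the inner and outer $n$-gons as $X_1\cdots X_n$ and $Y_1\cdots Y_n$ (indices mod $n$), and name the two triangles coming from the square $X_iX_{i+1}Y_{i+1}Y_i$ together with its diagonal $X_iY_{i+1}$ as $A_i=X_iX_{i+1}Y_{i+1}$ and $B_i=X_iY_iY_{i+1}$. A direct local inspection shows that every vertex is $4$-valent and that the four regions incident to $X_i$, read cyclically, are the inner $n$-gon $P_{\mathrm{in}}$, then $A_i$, then $B_i$, then $A_{i-1}$; likewise the four regions incident to $Y_i$, read cyclically, are the outer $n$-gon $P_{\mathrm{out}}$, then $B_i$, then $A_{i-1}$, then $B_{i-1}$. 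Consequently the two diagonal pairs of regions about $X_i$ are $\{P_{\mathrm{in}},B_i\}$ and $\{A_i,A_{i-1}\}$, and the two diagonal pairs about $Y_i$ are $\{P_{\mathrm{out}},A_{i-1}\}$ and $\{B_i,B_{i-1}\}$.

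With this description the conclusion is immediate. Every diagonal pair of regions about every crossing of $B_n$ consists of two distinct regions, at least one of which is a triangle. Hence condition (1) fails everywhere, since it requires both regions of some diagonal pair to be $p'$-gons with $p'>3$; and condition (2) fails everywhere, since it requires a single region to occupy both slots of a diagonal pair, which never happens here because the four regions around each crossing of $B_n$ are pairwise distinct. Therefore $B_n$ has no ancestor, i.e.\ $B_n$ is fundamental.

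I do not expect a real obstacle: the entire content is the combinatorial bookkeeping of which regions lie diagonally opposite at each crossing, and the one point to handle carefully is keeping the indices of the triangles $A_i,B_i$ straight while reading the cyclic order of regions around $X_i$ and around $Y_i$. The argument is uniform in $n\ge 3$; the case $n=3$, where $P_{\mathrm{in}}$ and $P_{\mathrm{out}}$ are themselves triangles, is covered without any change.
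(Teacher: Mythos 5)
Your proof is correct and follows the same route as the paper: the paper's own argument is the one-line observation that the two $n$-gon faces of $B_n$ are ``protected by a ring of triangles,'' so no crossing satisfies condition (1) or (2) of the preceding lemma, and your explicit listing of the diagonal pairs $\{P_{\mathrm{in}},B_i\}$, $\{A_i,A_{i-1}\}$ at $X_i$ and $\{P_{\mathrm{out}},A_{i-1}\}$, $\{B_i,B_{i-1}\}$ at $Y_i$ is just a careful spelling-out of that observation (and your reading of condition (2) as the same region occupying both diagonal slots is the intended one, as the lemma's proof shows). The only nitpick is that you phrase the lemma as an ``if and only if'' (``fundamental precisely when\dots''), whereas it only gives one implication; but you use only the correct direction, so nothing is affected.
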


\begin{proof}  Their two regions with $>3$ edges are protected by a ring of triangles. 
Thus there is no crossing satisfying the condition of the previous lemma. 
\end{proof}

\section{Virtual Doodles} \label{sect:virtual} 

In this section we define a virtual doodle, which is an equivalence class of a virtual doodle diagram on the plane.

An {\bf oriented} (or {\bf unoriented}) {\bf virtual doodle diagram} (on the plane) is a generically immersed oriented (or unoriented) circles on the plane such that 
some of the crossings are decorated by small circles.  It is often regarded as an oriented 4-valent graph on the plane.  When it is oriented, each crossing is as illustrated in Figure~\ref{fig06aflatandvirtual}.

    \begin{figure}[h]
    \centerline{\epsfig{file=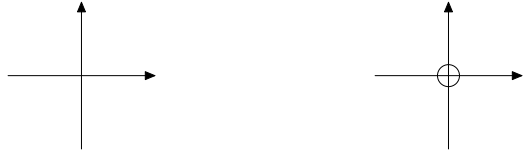, height=2.0cm}}
    \vspace*{8pt}
    \caption{A (Flat) Crossing and a Virtual Crossing}\label{fig06aflatandvirtual}
    \end{figure}

A crossing which is not encircled is called a {\bf flat} crossing, a {\bf real} crossing 
or sometimes simply a crossing. 
An encircled crossing is called a {\bf virtual} crossing. 

We consider moves on such diagrams.

We firstly consider moves $R_1$ and $R_2$ illustrated in Figure~\ref{fig07FR_1fig08FR_2}, where we should consider all possible orientations in the oriented case. They involve flat crossings, which are flat versions of the first Reidemeister move and the second. 
The shaded areas, a monogon and a bigon, have interiors disjoint from the rest of the diagram. We can divide these into two types: $R_1^+$ which creates the monogon and $R_1^-$ which deletes it. Similarly we have $R_2^+$ and  $R_2^-$.

    \begin{figure}[h]
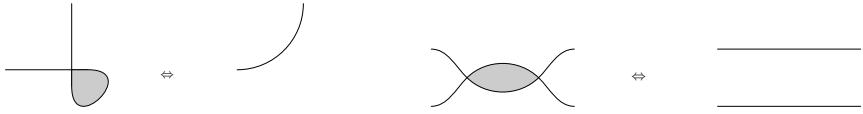

    \centerline{\epsfig{file=doodlefig07FR1.eps, height=1.4cm}
    \qquad \qquad 
    \epsfig{file=doodlefig08FR2.eps, height=0.8cm}
    }
    \vspace*{8pt}
    \caption{Moves $R_1$ (Left) and $R_2$ (Right)}\label{fig07FR_1fig08FR_2}
    \end{figure}

Moves $VR_1$, $VR_2$ and $VR_3$ are illustrated in Figures~\ref{fig10VR_1fig11VR_2} and \ref{fig12VR_3fig13VR_4}(Left), which involve virtual crossings. As in the flat case, there are versions $VR_1^{\pm1}$ and $VR_2^{\pm1}$. For $VR_3$ in the oriented case, we can assume that all three arcs are oriented from left to right, (braid like), see \cite{F}.

    \begin{figure}[h]
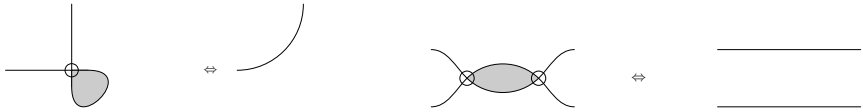

    \centerline{\epsfig{file=doodlefig10VR1.eps, height=1.4cm}
    \qquad \qquad 
    \epsfig{file=doodlefig11VR2.eps, height=0.8cm}
    }
    \vspace*{8pt}
    \caption{Moves $VR_1$ (Left) and  $VR_2$ (Right)}\label{fig10VR_1fig11VR_2}
    \end{figure}

    \begin{figure}[h]
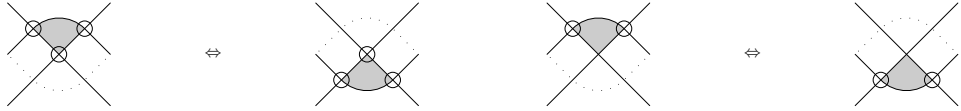

    \centerline{\epsfig{file=doodlefig12VR3.eps, height=1.4cm}
    \qquad \qquad 
    \epsfig{file=doodlefig13VFR3.eps, height=1.4cm}
    }
    \vspace*{8pt}
    \caption{Moves $VR_3$ (Left) and  $VR_4$ (Right)}\label{fig12VR_3fig13VR_4}
    \end{figure}

Finally, we have a mixed move $VR_4$ illustrated in Figure~\ref{fig12VR_3fig13VR_4}(Right) 
in which two consecutive virtual crossings appear to move past a flat crossing.

\begin{definition}{\rm 
Two oriented (or unoriented) virtual doodle diagrams are {\bf equivalent} if they are related by a sequence of the moves   
$R_1$, $R_2$, $VR_1$, $VR_2$, $VR_3$ and $VR_4$, modulo isotopies of the plane.   
An {\bf oriented} (or {\bf unoriented}) {\bf virtual doodle} is an equivalence class of an oriented (or unoriented) virtual doodle diagram.  
}\end{definition}

The move $R_3$ depicted in Figure~\ref{fig09FR_3fig14FVR_3}(Left), which is the flat version of the third Reidemeister move,  is forbidden in doodle theory. The move $FVR_3$ depicted in Figure~\ref{fig09FR_3fig14FVR_3}(Right), in which two consecutive flat crossings appear to move past a virtual crossing is also forbidden.

    \begin{figure}[h]
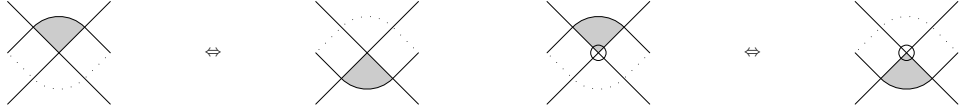

    \centerline{\epsfig{file=doodlefig09FR3.eps, height=1.4cm}
    \qquad \qquad 
    \epsfig{file=doodlefig14FVR3.eps, height=1.4cm}
    }
    \vspace*{8pt}
    \caption{Forbidden moves $R_3$ (Left) and $FVR_3$ (Right)}\label{fig09FR_3fig14FVR_3}
    \end{figure}

\begin{remark}{\rm 
If we allow the forbidden move $R_3$ then the theory of doodles collapse to the theory of flat virtual knots and links.    The flat Kishino knot diagram is depicted in Figure~\ref{fig15Kishino}.  It represents a non-trivial flat virtual knot, called the flat Kishino knot.  (The original Kishino knot diagram is a diagram of a virtual knot.  
For a proof of the non-triviality as a virtual knot, see \cite{BF}, \cite{KS}.)  The flat version was proved to be non-trivial in \cite{Kad} and also in \cite{FTu}.  Thus the flat Kishino knot is also non-trivial as a flat doodle.  Figure~\ref{fig16Kishinosurface} illustrates the Kishino knot as a doodle representative on a genus-$2$ surface. 
 
If we continue to forbid $R_3$ but allow the previously forbidden move  $FVR_3$ then we get the theory of welded doodles.   So far we have not been able to prove that non-trivial examples exist.
}\end{remark} 


    \begin{figure}[h]
    \centerline{\epsfig{file=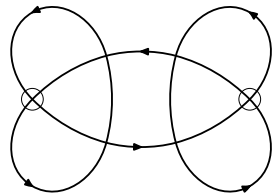, height=2.5cm}}
    \vspace*{8pt}
    \caption{The Flat Kishino as a Flat Virtual Diagram}\label{fig15Kishino}
    \end{figure}

    \begin{figure}[h]
    \centerline{\epsfig{file=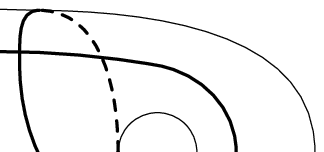, height=2.0cm}}
    \vspace*{8pt}
    \caption{The Flat Kishino  on a Surface of Genus 2}\label{fig16Kishinosurface}
    \end{figure}

Call a sub-path of the diagram which only passes only through virtual crossings a {\bf virtual path}. It is a consequence of  $VR_1$, $VR_2$, $VR_3$ and $VR_4$ that a virtual path can be moved any where in the diagram keeping its end points fixed provided that the new crossings engendered by this are virtual. Kauffman calls this the {\bf detour} move, \cite{K}.

\section{Virtual Doodles and Doodles Are the Same} \label{sect:virtualsame}

We show that there is a bijection between oriented (or unoriented) virtual doodles (on the plane) and oriented (or unoriented) doodles (on surfaces). 
The idea is similar to that in \cite{KK} (and \cite{CKS}) showing a bijection between virtual links and 
abstract links (or stable equivalence classes of link diagrams on surfaces).  

In this section, we denote a virtual doodle diagram (on $\R^2$) by $K$ and a doodle diagram (on a surface) by $D$.  

Let $K$ be a virtual doodle diagram with $m$ flat crossings.  Let $N_1, \dots, N_m$ be regular neighbourhoods of the flat crossings and put $W:= {\rm Cl}(\R^2 \setminus \cup_{i=1}^m N_i)$.  The intersection  $K \cap W$ is a union of arcs and loops immersed in $W$.  (Each intersection point of $K \cap W$  is a virtual crossing of $K$. A loop appears when $K$ has a component on which there are no flat crossings.) 
Let $K'$ be another virtual doodle diagram with $m$ flat crossings.  
Let $\sigma$ be a bijection from the set of 
crossings of $K$ to that of $K'$.  Using an isotopy of $\R^2$, we assume that each crossing of $K$ and the corresponding crossing of $K'$ under $\sigma$ are the same point of $\R^2$ and that 
$K$ and $K'$ are identical in $N_1, \dots, N_m$.  We say that $K$ and $K'$ have the same {\bf  Gauss data} with respect to $\sigma$ if there is a bijection $\tau$ from  the set of arcs and loops of $K \cap W$ to that of $K' \cap W$ such that for every arc $e$ of $K \cap W$ the endpoints of $e$ equal the endpoints of $\tau(e)$.  

\begin{lemma}\label{lem:Gauss}
Let $K$ and $K'$ be  virtual doodle diagrams with $m$ flat crossings. The following conditions are equivalent: 
\begin{itemize} 
\item[$(1)$]  
They have the same Gauss data with respect to a bijection between their flat crossings. 
\item[$(2)$] 
They  are related by a finite sequence of detour moves modulo isotopies of $\R^2$. 
\item[$(3)$]  
They  are related by a finite sequence of 
moves $VR_1$, $VR_2$, $VR_3$ and $VR_4$ modulo isotopies of $\R^2$.  
\end{itemize}
\end{lemma}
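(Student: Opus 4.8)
The plan is to prove Lemma~\ref{lem:Gauss} by establishing the cyclic chain of implications $(1)\Rightarrow(3)\Rightarrow(2)\Rightarrow(1)$, since this avoids proving any implication in the ``hard'' direction twice. The implication $(3)\Rightarrow(2)$ is essentially a definition-chase: each of the moves $VR_1$, $VR_2$, $VR_3$, $VR_4$ is itself a detour move (it relocates a short virtual sub-path while keeping its endpoints fixed and creating only virtual crossings), so a sequence of such moves is a fortiori a sequence of detour moves. Conversely, for $(2)\Rightarrow(1)$, I would observe that a single detour move does not change the combinatorial incidence data between flat crossings: it only alters how the connecting arcs wind through virtual crossings, not which pair of (flat-crossing, local-strand) endpoints each arc joins. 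Hence after a detour move the natural identity bijection $\sigma$ on flat crossings together with the induced bijection $\tau$ on the arcs/loops of $K\cap W$ witnesses that the Gauss data is preserved; isotopies of $\R^2$ obviously preserve it as well.

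The substantive implication is $(1)\Rightarrow(3)$, and this is the step I expect to be the main obstacle. Here I am given $K$ and $K'$ with $m$ flat crossings and a bijection $\sigma$ realizing the same Gauss data, and I must rebuild $K'$ from $K$ using only $VR_1,VR_2,VR_3,VR_4$ and planar isotopy. The strategy is: first use an isotopy of $\R^2$ to arrange that the flat crossings of $K$ and the $\sigma$-corresponding flat crossings of $K'$ coincide as points, with $K$ and $K'$ identical inside the neighbourhoods $N_1,\dots,N_m$ (this is exactly the normalization already set up before the lemma statement). It then remains to match up the two systems of arcs and loops $K\cap W$ and $K'\cap W$ inside $W={\rm Cl}(\R^2\setminus\bigcup N_i)$, rel the common boundary $\partial W$. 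Because $\tau$ matches endpoints, each arc $e$ of $K\cap W$ and its partner $\tau(e)$ have the same two endpoints on $\partial W$; the point is that \emph{any} two arcs (or loops) in the plane with the same endpoints differ by a detour-type modification, since in $\R^2$ one can homotope $e$ to $\tau(e)$ rel endpoints and, by genericity, realize that homotopy as a finite sequence of local moves in which an arc is pushed across another arc or across a flat-crossing disc or has a kink added/removed --- these are precisely $VR_2^{\pm1}$ (arc over arc, generating/removing a virtual bigon), $VR_3$ (arc over a virtual crossing), $VR_4$ (a pair of virtual crossings past a flat crossing / past a disc $N_i$), and $VR_1^{\pm1}$ (removing a virtual curl). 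One has to do this for all $m$ arcs/loops simultaneously and compatibly, so I would order the arcs and move them one at a time, using the detour principle to slide each arc into its target position without disturbing the already-placed ones (pushing a virtual sub-path across crossings that are virtual is always legal).

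The care needed in $(1)\Rightarrow(3)$ is (a) making sure the homotopy of an arc $e$ to $\tau(e)$ never forces an \emph{illegal} crossing change, i.e.\ never requires pushing a strand through a flat crossing as a genuine transversal passage (which would be $FVR_3$ or a flat Reidemeister move, both forbidden); this is guaranteed because $e$ and $\tau(e)$ are virtual paths and the regions $N_i$ containing the flat crossings are held fixed, so any interaction with a flat crossing is an interaction with the \emph{disc} $N_i$, handled by $VR_4$-type detours around it; and (b) handling loops (components of $K$ with no flat crossing) --- a loop of $K\cap W$ and its $\tau$-partner are two null-homotopic (or freely homotopic) simple closed virtual curves in $\R^2$, and by the detour move plus $VR_1^{\pm1}$ any virtual loop can be shrunk to a tiny trivial loop, so all such loops can be standardized. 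Assembling these local reductions in sequence transforms $K$ into $K'$ through moves of type $VR_1$--$VR_4$ and isotopy, completing the cycle. I would present (b) and the simultaneity bookkeeping as the genuinely delicate points and keep the rest at the level of ``by a standard general-position argument''.
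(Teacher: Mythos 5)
Your proposal is correct, but it takes a genuinely different route from the paper: the paper's entire proof is two sentences, declaring the equivalence of $(1)$ and $(2)$ to be obvious and citing Kauffman (cf.\ Kamada--Kamada) for the equivalence of $(2)$ and $(3)$, whereas you prove the cycle $(1)\Rightarrow(3)\Rightarrow(2)\Rightarrow(1)$ from scratch. Your easy legs $(3)\Rightarrow(2)$ and $(2)\Rightarrow(1)$ match what the paper treats as immediate; your substantive leg $(1)\Rightarrow(3)$ is in effect a reconstruction of the cited detour-move theorem, adapted to the doodle setting: homotope each arc of $K\cap W$ to its $\tau$-partner rel endpoints in the simply connected plane, realize the homotopy generically as a sequence of $VR_1^{\pm1}$, $VR_2^{\pm1}$, $VR_3$ moves, with passages across the fixed discs $N_i$ accounted for by $VR_4$, and standardize loops separately. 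What the paper's approach buys is brevity and reliance on an established result; what yours buys is self-containedness and, importantly, an explicit check of the one point that is specific to doodles rather than virtual knots, namely that the rebuilding of $K'$ from $K$ never forces the forbidden moves $R_3$ or $FVR_3$ (because the moving strand only ever acquires virtual crossings, and its interaction with a flat crossing is an interaction with the whole disc $N_i$, which is exactly $VR_4$). Your flagged delicate points --- the simultaneity bookkeeping when repositioning arcs one at a time, and the treatment of closed components without flat crossings --- are the right ones to worry about, and both are handled correctly by the detour principle; so the proposal stands as a valid, more detailed substitute for the paper's citation-based proof.
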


\begin{proof} 
The equivalence between (1) and (2) is obvious.  The equivalence between (2) and (3) is due to Kauffman \cite{K} (cf. \cite{KK}). 
\end{proof}

Let $K$ be a  virtual doodle diagram with $m$ flat crossings. We continue with the definition of $N_1, \dots, N_m$ and $W$ as before.

Thickening the arcs and loops in $W$, we obtain  bands and annuli immersed in $W$ whose cores are $K \cap W$.  The union of these bands and annuli with $N_1, \dots, N_m$ is a compact oriented surface immersed in $\R^2 = \R^2 \times \{0\} \subset \R^3$.  Here we assume the orientation of the surface is induced from the orientation of $\R^2$.  
Replacing it in neighbourhoods of virtual crossings as in Figure~\ref{fig17interpretation} in $\R^3$, we obtain a compact oriented surface, $F$,  embedded in $\R^3$  and a diagram, $D_F$, with flat crossings on it.  
We denote the pair $(F, D_F)$ by $\varphi(K)$.  Here the pair $(F, D_F)$ is considered up to orientation-preserving homeomorphic equivalence, and we ignore how it is embedded in $\R^3$.


    \begin{figure}[h]
    \centerline{\epsfig{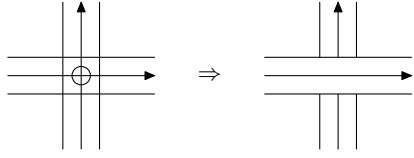}}
    \vspace*{8pt}
    \caption{Topological Interpretation of a Virtual Crossing}\label{fig17interpretation}
    \end{figure}

\begin{lemma}
If $K'$ is obtained from $K$ by $VR_1$, $VR_2$, $VR_3$ or $VR_4$, then $\varphi(K)$ is homeomorphic to $\varphi(K')$.
\end{lemma}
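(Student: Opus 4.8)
The plan is to analyze each of the four virtual moves $VR_1$, $VR_2$, $VR_3$, $VR_4$ separately and show that passing from $K$ to $K'$ changes the immersed-band-and-annuli surface in $W$ only by a homeomorphism, so that after the embedding-into-$\mathbb{R}^3$ replacement of Figure~\ref{fig17interpretation} the resulting surfaces $F$ and $F'$ are homeomorphic by a homeomorphism carrying $D_F$ to $D_{F'}$. First I would observe that none of $VR_1, VR_2, VR_3, VR_4$ involves a flat crossing in the region where the move takes place, so the neighbourhoods $N_1,\dots,N_m$ of the flat crossings may be taken to lie outside the disc in which the move is performed; hence the surface pieces coming from the $N_i$ are literally unchanged, and the only thing to check is what happens to the bands/annuli over $W$ and their replacement at virtual crossings.

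Next I would handle the moves in increasing order of difficulty. For $VR_1^{\pm 1}$ (a virtual curl) the band over the arc acquires or loses a little twisted disc at the virtual crossing; after the Figure~\ref{fig17interpretation} replacement this is a disc attached along an arc of the boundary, which can be absorbed by a homeomorphism — i.e. it is a boundary-collar isotopy of the surface, changing nothing topologically. For $VR_2^{\pm1}$ two virtual crossings are created or destroyed on a bigon; the corresponding change to $F$ is the addition or deletion of a disc glued along an arc (a ``finger'' that is a trivial $1$-handle on the boundary), again absorbed by a homeomorphism. For $VR_3$ (three strands, all braid-like, sliding a strand across a virtual crossing of the other two) I would note that the three double-banded/tripled region, after the replacement of Figure~\ref{fig17interpretation}, is in both $K$ and $K'$ a disc with three bands attached, and the two resulting surfaces-with-arcs are identified by the obvious homeomorphism of that disc; the key point is that since all crossings in the move are virtual, no flat arc is involved and the combinatorial pattern of which boundary arcs the bands attach to is the same on both sides. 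For the mixed move $VR_4$, a flat crossing IS present but only two virtual crossings slide past it; here I would place the band/$N_i$ over the flat crossing fixed, observe that the two virtual crossings travel along a band and the Figure~\ref{fig17interpretation} replacement of the pair before and after the move differ only by sliding one attaching arc of a band along the boundary of $N_i$, which is realised by a homeomorphism of $F$ fixing $D_F$ near the flat crossing.

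A cleaner way to organise all four cases simultaneously, which I would actually write, is to invoke Lemma~\ref{lem:Gauss}: since $K$ and $K'$ are related by one of $VR_1,\dots,VR_4$, they have the same Gauss data with respect to the identity bijection on flat crossings, meaning there is a correspondence between the arcs and loops of $K\cap W$ and $K'\cap W$ matching endpoints on $\partial N_i$. The surface $\varphi(K)$ is built by a recipe that depends only on this abstract data — a disc $N_i$ for each flat crossing with four marked boundary arcs, a band for each arc of $K\cap W$ glued to the prescribed pair of marked arcs, an annulus for each loop, and then the local replacement of Figure~\ref{fig17interpretation} which only affects a neighbourhood of each transverse double point of the immersed surface and is performed the same way regardless of the planar picture. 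Hence identical Gauss data yields homeomorphic $(F,D_F)$: one constructs the homeomorphism by sending $N_i$ to $N_i'$ via the identity, each band to the band on the corresponding arc under $\tau$, each annulus likewise, and checking the gluings match because endpoints match.

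The main obstacle I anticipate is the last point of rigour: the surface $F$ is obtained from the immersed surface by the Figure~\ref{fig17interpretation} surgery, and one must be sure this surgery is a well-defined operation that depends only on the immersed surface (equivalently on the Gauss data) and not on the particular planar immersion — in other words that different planar diagrams with the same Gauss data give the same abstract surgered surface. Establishing this amounts to saying that the replacement is a purely local modification in a ball around each transverse double point, that these balls are disjoint, and that away from them the surface is unchanged; so the abstract homeomorphism type of $F$ is determined by the abstract surface-with-double-points, which is determined by the Gauss data. Once this is pinned down, the homeomorphism $\varphi(K)\cong\varphi(K')$ for each of $VR_1,\dots,VR_4$ follows, and that is exactly the statement of the lemma.
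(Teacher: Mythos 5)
Your proposal is correct, and the ``cleaner way'' you say you would actually write is essentially the paper's own proof: the paper observes directly that a $VR$ move preserves the neighbourhoods $N_1,\dots,N_m$ of the flat crossings and induces a bijection between the arcs and loops of $K\cap W$ and $K'\cap W$, hence a homeomorphism of the union of bands, annuli and the $N_i$. Your preliminary case-by-case analysis and the appeal to Lemma~\ref{lem:Gauss} are harmless extra packaging around the same idea, namely that $(F,D_F)$ depends only on the Gauss data.
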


\begin{proof} 
 By a move $VR_1$, $VR_2$, $VR_3$ or $VR_4$,  $N_1, \dots, N_m$ are preserved, and there is a bijection from the set of arcs and loops of $K \cap W$ to that of $K' \cap W$.  
It induces a homeomorphism from the union of the  bands and annuli  in $\varphi(K)$ and that in $\varphi(K')$.  Sending $N_1, \dots, N_m$ identically, we have a homeomorphism from $\varphi(K)$ to $\varphi(K')$.  
\end{proof}

\begin{lemma}
If $K'$ is obtained from $K$ by $R_1^{\pm}$ or  $R_2^{\pm}$, then there is a closed connected oriented surface $\Sigma$ and doodle diagrams $D$ and $D'$ on $\Sigma$ such that 
$(N(D, \Sigma), D)$ is homeomorphic to $\varphi(K)$, $(N(D', \Sigma), D')$ is homeomorphic to $\varphi(K')$ and $D'$ is obtained from $D$ by 
$H_1^{\pm}$ or $H_2^{\pm}$ on $\Sigma$. 
\end{lemma}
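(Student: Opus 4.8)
The plan is to first observe that the moves $R_1^\pm$ and $R_2^\pm$ are local: they take place inside a disc $\Delta \subset \R^2$ that meets $K$ only in the standard picture of the move (a monogon/curl for $R_1$, a bigon for $R_2$), and in particular $\Delta$ contains no virtual crossings. Consequently the neighbourhoods $N_1,\dots,N_m$ of the flat crossings that lie outside $\Delta$ are untouched, and the bands and annuli of $K\cap W$ outside $\Delta$ are untouched; only the piece of the immersed surface sitting over $\Delta$ changes, and it changes exactly in the way that realises an $H_1^\pm$ (respectively $H_2^\pm$) move. So $\varphi(K')$ is obtained from $\varphi(K)$ by inserting or deleting a monogon/bigon in a disc region of the surface $F$; the two surfaces $F,F'$ underlying $\varphi(K)$ and $\varphi(K')$ are the same abstract surface (they differ only in the local picture of the diagram, not in the topology of the carrying surface), and the diagram changes by $H_1^\pm$ or $H_2^\pm$.

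Next I would promote $\varphi(K)=(F,D_F)$ and $\varphi(K')=(F',D_{F'})$ to diagrams on a single \emph{closed} surface. The surface $F$ produced by the construction is compact but with boundary (it is built from $N_i$'s, bands and annuli immersed in $W$, then resolved over virtual crossings); cap off every boundary circle of $F$ with a disc, chosen disjoint from $D_F$, to get a closed connected oriented surface $\Sigma$, and do the same to $F'$. Because $F$ and $F'$ are homeomorphic rel the part of the diagram outside $\Delta$, and the capping discs can be taken to correspond, the resulting closed surfaces $\Sigma$ and $\Sigma'$ are homeomorphic; identify them, calling the common surface $\Sigma$, and let $D,D'$ be the images of $D_F,D_{F'}$. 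Then $(N(D,\Sigma),D)\cong\varphi(K)$ and $(N(D',\Sigma),D')\cong\varphi(K')$, since a regular neighbourhood of $D$ in $\Sigma$ is precisely $F$ back again (capping and then taking a regular neighbourhood of the diagram undoes the capping up to homeomorphism), and likewise for $D'$. Finally $D'$ is obtained from $D$ by the same local move that related $D_F$ to $D_{F'}$, namely $H_1^\pm$ or $H_2^\pm$ on $\Sigma$, since the monogon/bigon created or destroyed sits in a disc and is disjoint from the rest of $D$.

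The step I expect to require the most care is the claim that the carrying surface is genuinely \emph{unchanged} under $R_1^\pm$ and $R_2^\pm$ — i.e.\ that capping off $F$ and $F'$ gives homeomorphic closed surfaces, not surfaces differing by a handle. For $R_2$ one must check that the bigon created does not already enclose other parts of the diagram: by definition of the move the bigon has interior disjoint from the rest of $K$, hence disjoint from all $N_i$ and all bands, so the surface-over-$\Delta$ is a disc and the Euler characteristic bookkeeping shows no genus is introduced; for $R_1$ the curl region is likewise an embedded disc. It is here that one uses that $R_1,R_2$ are flat-crossing moves with empty interiors, exactly as specified in Section~\ref{sect:virtual}, so that they lift to $H_1^\pm,H_2^\pm$ rather than to a handle move $h^\pm$. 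Everything else is routine diagram-chasing through the construction of $\varphi$.
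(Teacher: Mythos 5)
Your argument is correct and takes essentially the same route as the paper's (much terser) proof: cap off the boundary circles of $F$ with $2$-discs to obtain a closed surface on which the empty-interior monogon/bigon becomes a disc region where $H_1^{\pm}$ or $H_2^{\pm}$ can be applied, the regular neighbourhood of the resulting diagram being $\varphi(K)$ (resp.\ $\varphi(K')$) again. One phrasing to tighten: the compact surfaces $F$ and $F'$ with boundary are \emph{not} homeomorphic (adding or deleting a flat crossing changes the Euler characteristic and the number of boundary circles); only the capped closed surfaces agree, which is what your subsequent argument in fact uses.
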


\begin{proof} 
It suffices to consider the case where $K'$ is obtained from $K$ by $R_1^{-}$ or $R_2^{-}$.  Let $\varphi(K) =(F, D_F)$.  Attaching $2$-disks to $F$ along the boundary, we have a closed oriented surface $\tilde F$ in which there is a monogon or a bigon where $H_1^{-}$ or $H_2^{-}$ can be applied.    
\end{proof}

So if  $K$ is a  virtual doodle diagram and $\varphi(K) = (F, D_F)$ is as above then by  
attaching $2$-disks, annuli, or even any compact connected oriented surfaces to $F$ along the boundary, we have a closed oriented surface $\Sigma$ and an (admissible) doodle diagram $D$ such that 
$(N(D, \Sigma), D)$ is homeomorphic to $\varphi(K)$.  We call it a {\bf doodle diagram associated to $K$}.  
It is not unique as a representative, however it is unique as a doodle.  We call it, as a doodle, the {\bf  doodle associated to $K$}.  

\begin{theorem} \label{thm:bijection}
We have a bijection $\Phi $ from the family of oriented (or unoriented) virtual doodles to the family of oriented (or unoriented) doodles by 
defining $\Phi ([K])$ to be the doodle associated to $K$.    
\end{theorem}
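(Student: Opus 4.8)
The plan is to construct an inverse map $\Psi$ to $\Phi$ and verify that $\Phi$ and $\Psi$ are mutually inverse bijections. The candidate for $\Psi$ is the classical "projection to the plane" construction already used in the virtual-link setting of \cite{KK, CKS}: given a doodle diagram $D$ on a closed oriented surface $\Sigma$, pass to the regular neighbourhood $N(D,\Sigma)$, which deformation retracts onto $D$ and hence is a compact oriented surface with boundary carrying a flat diagram; immerse this surface generically into $\R^2$ (every compact surface with non-empty boundary immerses in the plane), and record each point of self-intersection of the immersed surface that is not a flat crossing of $D$ as a virtual crossing. This produces a virtual doodle diagram $K_D$, and one sets $\Psi([D]) = [K_D]$. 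So the first step is to carry out this construction carefully and record that $\varphi(K_D)$ is homeomorphic to $(N(D,\Sigma),D)$ by construction.

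The second step is to check that $\Psi$ is well defined, i.e.\ independent of the two choices made: the choice of generic immersion $N(D,\Sigma)\hookrightarrow\R^2$, and the choice of $D$ within its doodle class. For the immersion: any two generic immersions of the same surface into the plane differ by a regular homotopy, and by the standard analysis this realises a change of Gauss data only through the virtual-crossing moves, so by Lemma~\ref{lem:Gauss} the resulting virtual doodle diagrams are $VR_1,VR_2,VR_3,VR_4$-equivalent, hence define the same virtual doodle. For the doodle class: the moves $H_1^{\pm1}$, $H_2^{\pm1}$ act on $N(D,\Sigma)$ as local modifications supported in a disc, which translate directly into $R_1^{\pm1}$, $R_2^{\pm1}$ on $K_D$ (this is essentially the content of the lemma preceding Theorem~\ref{thm:bijection}, read in reverse), while a handle move $h^{\pm1}$ takes place in a sub-surface disjoint from $D$, hence disjoint from $N(D,\Sigma)$, and so does not change $N(D,\Sigma)$ or the Gauss data at all — it only changes how the complementary surface is capped off, which $\varphi$ ignores. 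Thus $\Psi$ descends to a well-defined map on doodles.

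The third step is $\Phi\circ\Psi = \mathrm{id}$ and $\Psi\circ\Phi = \mathrm{id}$. For $\Phi\circ\Psi$: starting from $D$ on $\Sigma$, we produced $K_D$ with $\varphi(K_D)\cong(N(D,\Sigma),D)$; the doodle associated to $K_D$ is obtained by capping off the boundary of $\varphi(K_D)$ by some oriented surfaces, and by the uniqueness-as-a-doodle statement proved just before the theorem, the resulting doodle equals $[D]$ regardless of how we cap, since $(\Sigma,D)$ itself is one admissible capping. For $\Psi\circ\Phi$: starting from a virtual doodle diagram $K$, the doodle diagram $D$ associated to $K$ satisfies $(N(D,\Sigma),D)\cong\varphi(K)=(F,D_F)$; re-immersing $N(D,\Sigma)\cong F$ into the plane recovers a virtual diagram with the same flat crossings and the same Gauss data as $K$ (the surface $F$ was built precisely from $K$'s flat-crossing neighbourhoods and the thickened virtual paths), so by Lemma~\ref{lem:Gauss} it is virtual-move-equivalent to $K$, giving $[\,K_D\,]=[K]$.

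The main obstacle I expect is the well-definedness of $\Psi$ with respect to the choice of planar immersion in Step 2 — making precise that two generic immersions of a surface-with-boundary into $\R^2$ are regularly homotopic and that such a regular homotopy only alters the virtual-crossing pattern in ways captured by $VR_1$–$VR_4$ (equivalently, by detour moves), without ever creating or destroying a flat crossing or changing the Gauss data. Everything else is either a direct application of Lemma~\ref{lem:Gauss} and the three lemmas preceding the theorem, or the bookkeeping of matching up flat crossings under the various bijections $\sigma$, $\tau$.
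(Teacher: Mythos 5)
Your proposal follows essentially the same route as the paper: the paper proves surjectivity by exactly your planar-immersion construction of a virtual diagram associated to $(\Sigma,D)$, and proves injectivity by showing that the reverse assignment is constant on doodle classes (splitting into the same three cases you list: same Gauss data / handle moves, $H_1^{\pm},H_2^{\pm}$ translating to $R_1^{\pm},R_2^{\pm}$), with Lemma~\ref{lem:Gauss} closing the loop. Packaging this as an explicit two-sided inverse $\Psi$ rather than as surjectivity plus injectivity is only a cosmetic difference.

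One step of your argument should be repaired, though it is easily fixable. The claim that any two generic immersions of $N(D,\Sigma)$ into $\R^2$ differ by a regular homotopy is false in general: by Hirsch--Smale theory, immersions of a compact surface with boundary into the plane are classified up to regular homotopy by rotation numbers along generators of $\pi_1$, so distinct regular homotopy classes exist already for an annulus. Fortunately the regular-homotopy detour is unnecessary: the Gauss data of the resulting virtual diagram (the flat crossings and the endpoint-matching of the arcs and loops outside their neighbourhoods) is determined by $(N(D,\Sigma),D)$ alone, independently of the chosen orientation-preserving immersion, so any two outputs have the same Gauss data and Lemma~\ref{lem:Gauss} immediately gives their equivalence under $VR_1$--$VR_4$. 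This direct comparison of Gauss data is precisely how the paper handles the point you flagged as the main obstacle.
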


\begin{proof} We consider the oriented case.  The unoriented case follows from the oriented case.   

The well-definedness of $\Phi $ follows from the previous lemmas.  We shall prove that $\Phi $ is surjective and then injective.   

Let $\pi : \R^3 \to \R^2$ be the projection $(x,y,z) \mapsto (x,y)$. 

Let $(\Sigma, D)$ be a doodle diagram $D$ on a surface $\Sigma$. 
By an abuse of notation let $N_1, \dots, N_m$ also stand for the regular neighbourhoods of crossings of $D$ in $\Sigma$, 
and put $X := {\rm Cl}(\Sigma \setminus \cup_{i=1}^m N_i)$.  The intersection $D \cap X$ is a union of arcs and loops embedded in $X$.  The neighborhoods of the arcs and loops in $X$ are bands and annuli in $X$, and the union of these bands and annuli together with $N_1, \dots, N_m$ forms a regular neighborhood $N(D)$ of $D$ in $\Sigma$.  Consider an embedding $\tilde g : N(D) \to \R^3$ such that $\tilde g (\cup_{i=1}^m N_i) 
\subset \R^2 \times \{0\}$ and $g := \pi \circ \tilde g : N(D) \to \R^2$ is an orientation-preserving immersion whose singularity set is a union of transverse intersections among bands and annuli. 
Regard $g(D)$ as a  virtual doodle diagram, say $K$, such that the flat crossings are in $g(\cup_{i=1}^m N_i)$ and the virtual crossings are in the intersections of bands and annuli.  By definition, $(N(D), D)$ is $\varphi(K)$ and $(\Sigma, D)$ is a doodle diagram associated to $K$.  
This shows that $\Phi$ is surjective.  
We call such a $K$ a {\bf  virtual doodle diagram associated to $(\Sigma, D)$}.  

Before proving the injectivity of $\Phi$, we introduce the notion of Gauss data for doodle diagrams on surfaces, which is analogous to the notion of Gauss data for virtual doodle diagrams. 

Let $(\Sigma, D)$ and $(\Sigma', D')$ be doodle diagrams on surfaces with the same number of crossings.  
Let $\sigma$ be a bijection from the set of crossings of $D$ to that of $D'$.  
Let $N_1, \dots, N_m$ be regular neighbourhoods of crossings of $D$ and 
$N_1', \dots, N_m'$ be regular neighbourhoods of the corresponding crossings of $D'$ under $\sigma$.  
Let $f : \cup_{i=1}^m N_i \to  \cup_{i=1}^m N_i'$ be a homeomorphism such that 
for each $i$, $(N_i, D \cap N_i)$ is mapped to $(N_i', D' \cap N_i')$ homeomorphically with respect to their orientations.  Let $X= {\rm Cl}(\Sigma \setminus \cup_{i=1}^m N_i)$ and $X'= {\rm Cl}(\Sigma' \setminus \cup_{i=1}^m N_i')$.  The intersection $D \cap X$ is a union of some arcs and loops embedded in $X$.  
We say that $(\Sigma, D)$ and $(\Sigma', D')$ have the same {\bf  Gauss data} with respect to $\sigma$ if the homeomorphism $f: \cup_{i=1}^m N_i \to  \cup_{i=1}^m N_i'$ has an extension to 
a homeomorphism from $\cup_{i=1}^m N_i \cup (D \cap X)$ to $\cup_{i=1}^m N_i' \cup (D' \cap X')$.  

Note that doodle diagrams $(\Sigma, D)$ and $(\Sigma', D')$ have the same Gauss data with respect to a bijection between their crossings if and only if they are related by 
a finite sequence of homeomorphic equivalence and surface surgeries disjoint from diagrams.   

Now we prove that $\Phi$ is injective.  
Let $(\Sigma, D)$ and $(\Sigma', D')$ be doodle diagrams on surfaces and 
let  $K$ and $K'$ be their associated  virtual doodle diagrams.  

(1) Suppose that $(\Sigma, D)$ and $(\Sigma', D')$ have 
the same Gauss data with respect to a bijection between their crossings.  
Then $K$ and $K'$ have 
the same Gauss data with respect to a bijection between their flat crossings.  
By Lemma~\ref{lem:Gauss}, we see that $K$ and $K'$ represent the same  virtual doodle.  

(2) Suppose that $\Sigma =  \Sigma'$ and  
 $D'$ is obtained from $D$ by a move $H_1^{-}$ or $H_2^{-}$.  In the construction of $K$ and $K'$, 
taking embeddings $\tilde g: N(D) \to \R^3$ and $\tilde g' : N(D') \to \R^3$ suitably, we can obtain 
$K= g(D)$ and $K'= g'(D')$ such that $K'$ is obtained from $K$ by a move $R_1^{-}$ or $R_2^{-}$.  Thus $K$ and $K'$ represent the same  virtual doodle.   

(3) Suppose that $\Sigma =  \Sigma'$ and  
 $D'$ is obtained from $D$ by a move $H_1^{+}$ or $H_2^{+}$.  From the previous case, we see that 
$K$ and $K'$ represent the same  virtual doodle.   

Therefore we see that $\Phi$ is injective.  
\end{proof}

For an example of a doodle illustrated in two ways see Figures~\ref{fig15Kishino} and \ref{fig16Kishinosurface}.

\section{Minimal Virtual Doodles and Genera} \label{sect:minimalvirtual} 

In this section we will look at minimal virtual doodle diagrams and discuss about the genera of doodles.

\begin{definition}{\rm 
A virtual doodle diagram is {\bf minimal} if a doodle diagram associated to it is minimal.   
}\end{definition} 

Note that a virtual doodle diagram $K$ is minimal if and only if the doodle diagram 
obtained from $\varphi(K)= (F, D_F)$ by attaching discs along the boundary is a doodle diagram 
without monogons or bigons.  

For a virtual doodle, a minimal virtual doodle diagram is not unique.  However, we have the following. 

\begin{proposition}
Let $K$ and $K'$ be minimal oriented (or unoriented) virtual doodle diagrams representing the same oriented (or unoriented) virtual doodle.  
Then  $K$ and $K'$ are related by a finite sequence of detour moves modulo isotopies of $\R^2$. 
\end{proposition}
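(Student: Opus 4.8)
The plan is to deduce the statement from the uniqueness of minimal doodle diagrams on surfaces (Theorem~\ref{thm:minimal}), the bijection of Theorem~\ref{thm:bijection}, and the Gauss-data characterisation in Lemma~\ref{lem:Gauss}. I treat the oriented case; the unoriented case is identical. Write $\varphi(K) = (F, D_F)$ and $\varphi(K') = (F', D_{F'})$, and let $(\Sigma, D)$ and $(\Sigma', D')$ be the doodle diagrams obtained from $(F, D_F)$ and $(F', D_{F'})$ by capping all boundary circles with $2$-discs. By the remark following the definition of a minimal virtual doodle diagram, the minimality of $K$ and of $K'$ says exactly that $(\Sigma, D)$ and $(\Sigma', D')$ contain no monogons and no bigons; and since capping the boundary circles of $F$ and $F'$ with discs makes every complementary region simply connected, $(\Sigma, D)$ and $(\Sigma', D')$ are minimal doodle diagrams. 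Unwinding the construction of $\varphi$, one also checks that $K$ is a virtual doodle diagram associated to $(\Sigma, D)$ in the sense used in the proof of Theorem~\ref{thm:bijection} --- for the re-embedding $N(D) = F \hookrightarrow \R^3$ take the one already used to build $\varphi(K)$ --- and similarly $K'$ is associated to $(\Sigma', D')$.

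Next I would invoke uniqueness. Since $K$ and $K'$ represent the same virtual doodle, Theorem~\ref{thm:bijection} gives that $(\Sigma, D)$ and $(\Sigma', D')$ represent the same doodle; both being minimal, Theorem~\ref{thm:minimal} provides an orientation-preserving homeomorphism between them carrying $D$ onto $D'$. In particular $(\Sigma, D)$ and $(\Sigma', D')$ have the same Gauss data with respect to the induced bijection between their crossings, so by case (1) of the injectivity argument in the proof of Theorem~\ref{thm:bijection} their associated virtual doodle diagrams $K$ and $K'$ have the same Gauss data with respect to a bijection between their flat crossings. Finally, by the equivalence of conditions $(1)$ and $(2)$ in Lemma~\ref{lem:Gauss}, $K$ and $K'$ are related by a finite sequence of detour moves modulo isotopies of $\R^2$, which is the assertion.

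The step that requires the most care, and the one I expect to be the main obstacle, is the bookkeeping in the first paragraph: one must check that the disc-capped doodle diagram really is \emph{associated to} $K$ in the precise sense of Theorem~\ref{thm:bijection} (so that the Gauss-data comparison of the second paragraph applies), and that disc-capping produces a genuinely minimal diagram rather than merely one without small faces. Both facts are routine once the definitions of $\varphi$ and of an associated diagram are unwound, but they are the only points at which a loose reading of those definitions could derail the argument; everything else is a direct assembly of results already established.
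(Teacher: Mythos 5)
Your proposal is correct and follows essentially the same route as the paper's own proof: cap the boundary of $\varphi(K)$ and $\varphi(K')$ with discs to obtain minimal doodle diagrams, invoke Theorem~\ref{thm:minimal} to get a homeomorphism, deduce equality of Gauss data, and conclude via Lemma~\ref{lem:Gauss}. The extra bookkeeping you flag (that disc-capping yields a genuinely minimal diagram and that $K$ is an associated virtual diagram of it) is exactly what the paper leaves implicit, so your write-up is just a more careful version of the same argument.
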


\begin{proof}
Let $(F, D)$ (or $(F', D')$) be the doodle diagram obtained from $\varphi(K)$ (or $\varphi(K')$) 
by attaching discs along the boundary.  Since $K$ and $K'$ are minimal and represent the same virtual doodle, 
$(F, D)$ and $(F', D')$ are minimal and represent the same doodle.  By Theorem~\ref{thm:minimal},  $(F, D)$ and $(F', D')$ are homeomorphic.  This implies that $K$ and $K'$ have the same Gauss data. By Lemma~\ref{lem:Gauss},  $K$ and $K'$ are related by a finite sequence of detour moves modulo isotopies of $\R^2$. 
\end{proof}

\begin{example}{\rm 
Figure~\ref{figd3_1} shows a minimal unoriented doodle diagram with $3$ crossings.  We denote it by $d3.1$. 

    \begin{figure}[h]
    \centerline{\epsfig{file=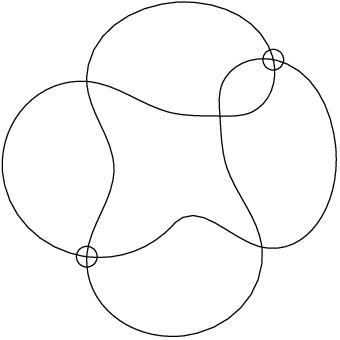, height=2.0cm}}
    \vspace*{8pt}
    \caption{$d3.1$}\label{figd3_1}
    \end{figure}
    
}\end{example}

\begin{example}{\rm 
Figure~\ref{figd4} shows $19$ minimal unoriented doodle diagrams with $4$ crossings. They are not equivalent each other as unoriented virtual doodles.  

    \begin{figure}[h]
    \centerline{\epsfig{file=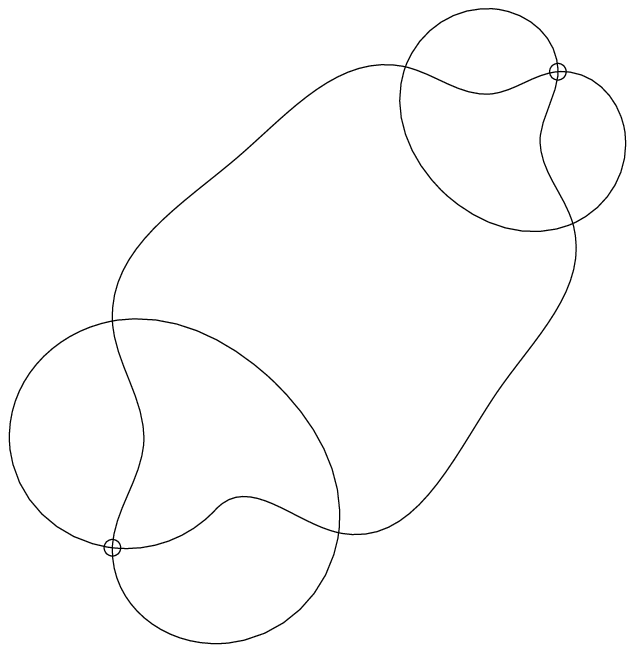, height=1.8cm}d4.1 \hfill \epsfig{file=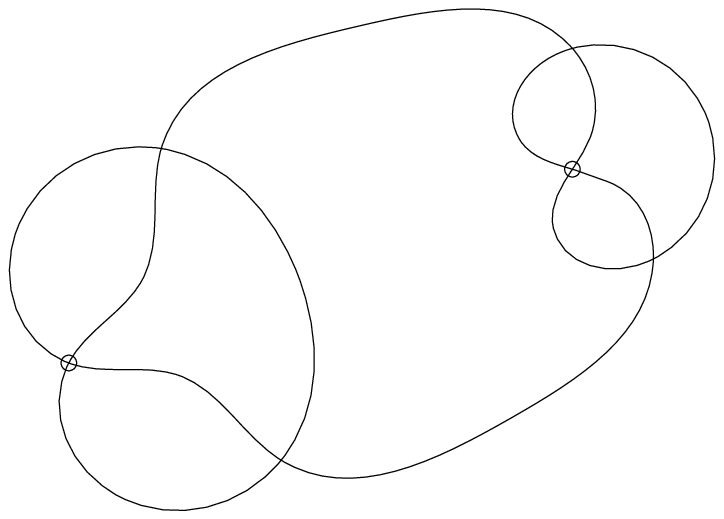, height=1.8cm}d4.2 \hfill \epsfig{file=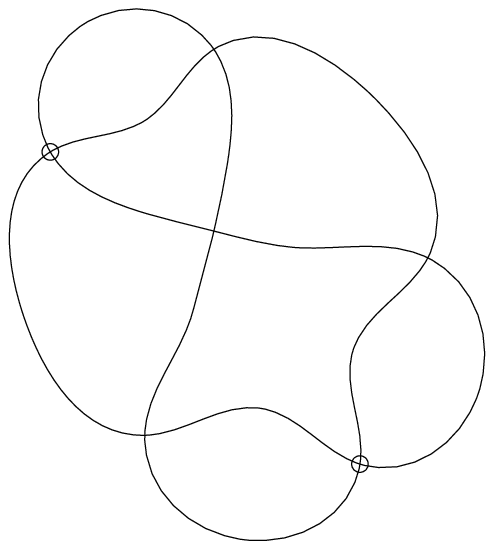, height=1.8cm}d4.3 \hfill \epsfig{file=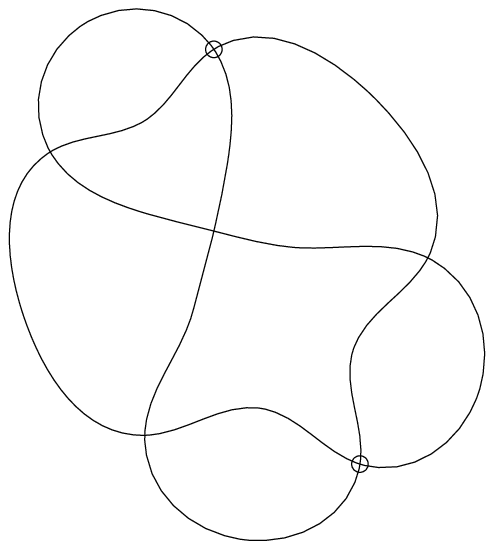, height=1.8cm}d4.4}
    \vspace*{8pt}
    \centerline{\epsfig{file=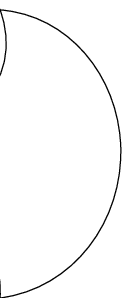, height=1.8cm}d4.5 \hfill \epsfig{file=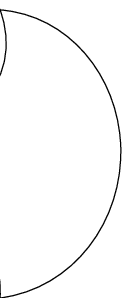, height=1.8cm}d4.6 \hfill  \epsfig{file=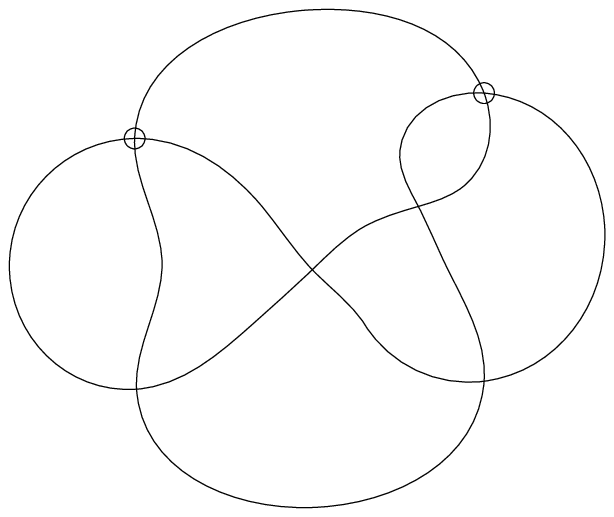, height=1.8cm}d4.7 \hfill \epsfig{file=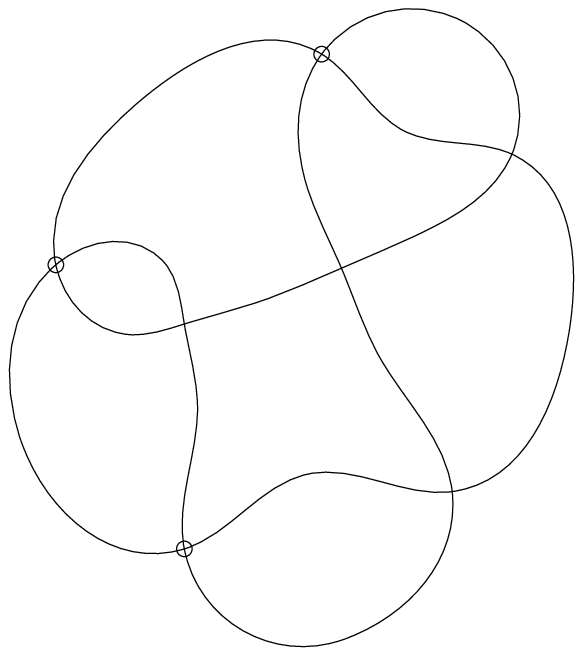, height=1.8cm}d4.8}
    \vspace*{8pt}
     \centerline{\epsfig{file=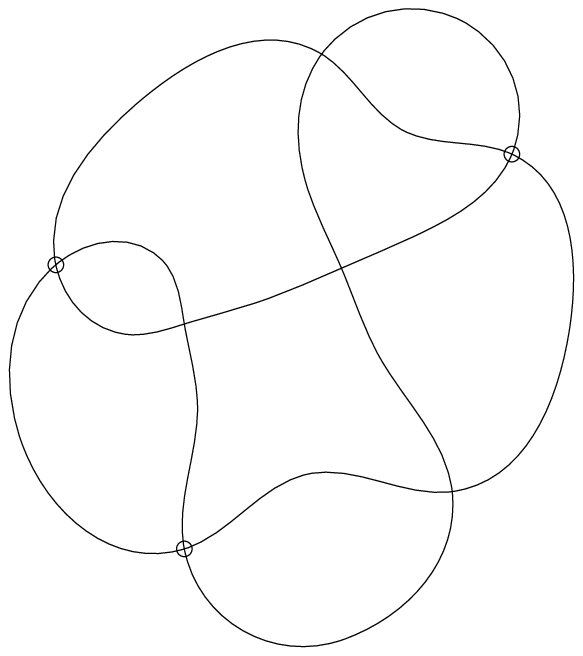, height=1.8cm}d4.9 \hfill \epsfig{file=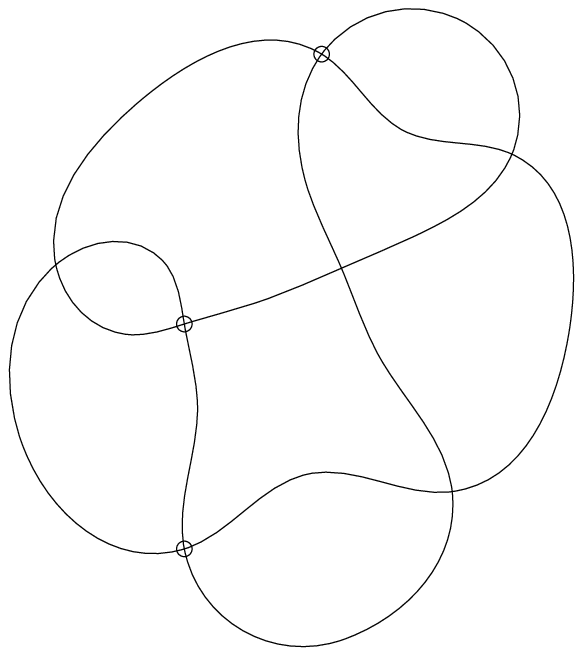, height=1.8cm}d4.10 \hfill \epsfig{file=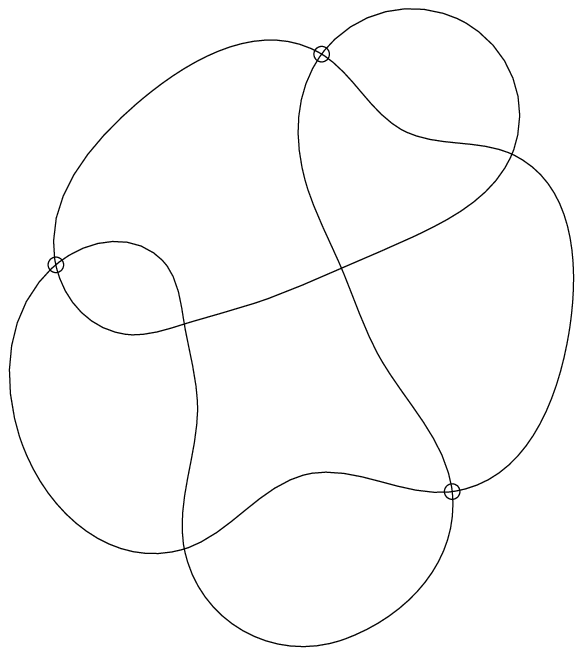, height=1.8cm}d4.11 \hfill \epsfig{file=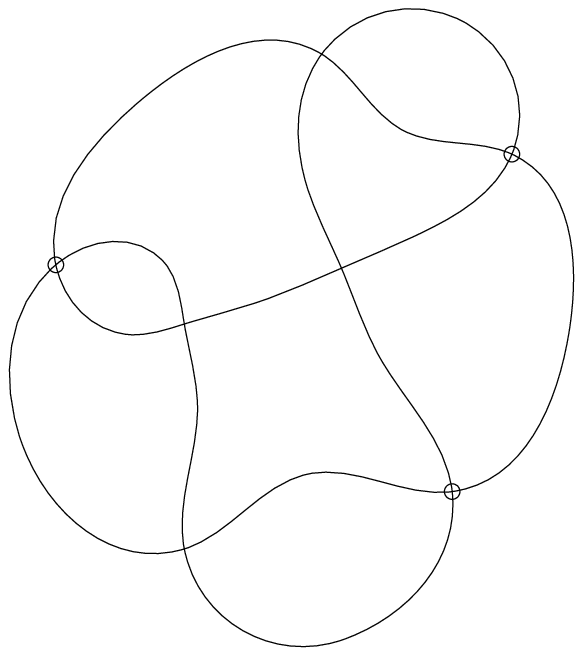, height=1.8cm}d4.12 }
    \vspace*{8pt}
     \centerline{\epsfig{file=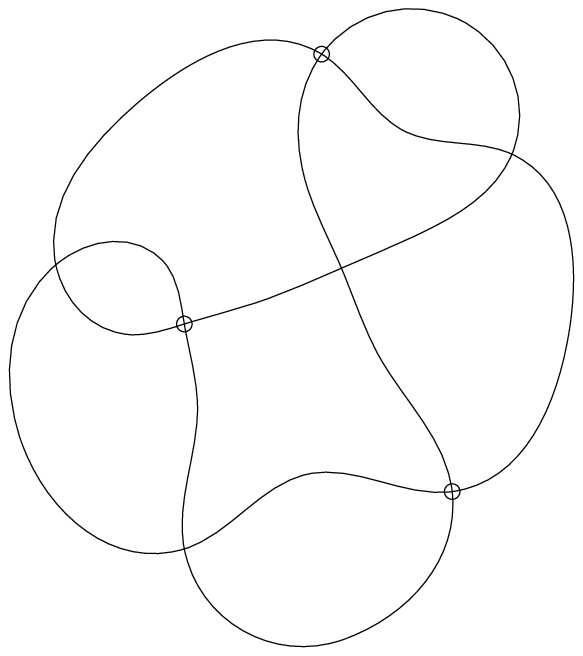, height=1.8cm}d4.13 \hfill \epsfig{file=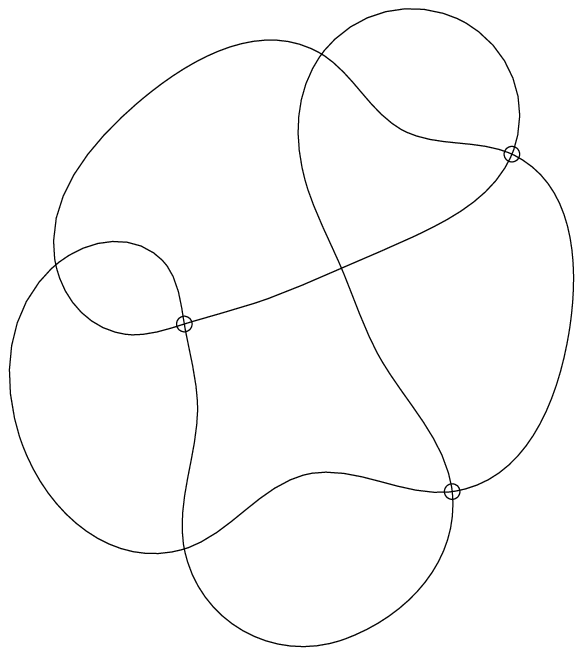, height=1.8cm}d4.14 \hfill \epsfig{file=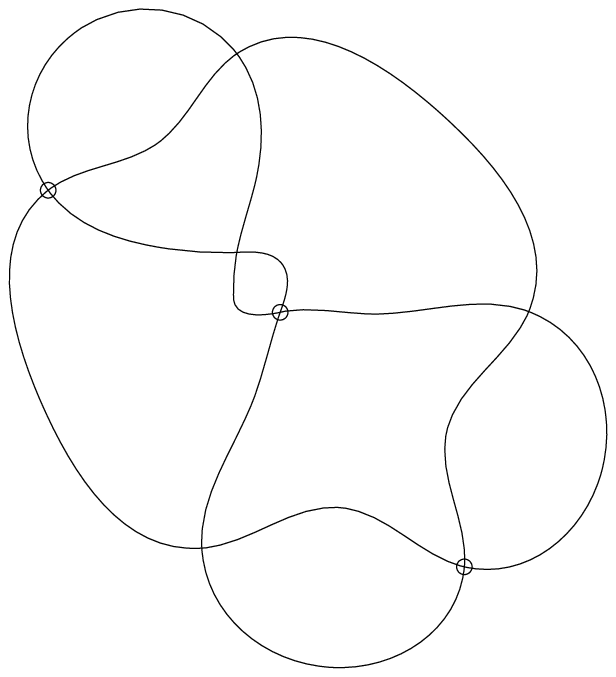, height=1.8cm}d4.15 \hfill \epsfig{file=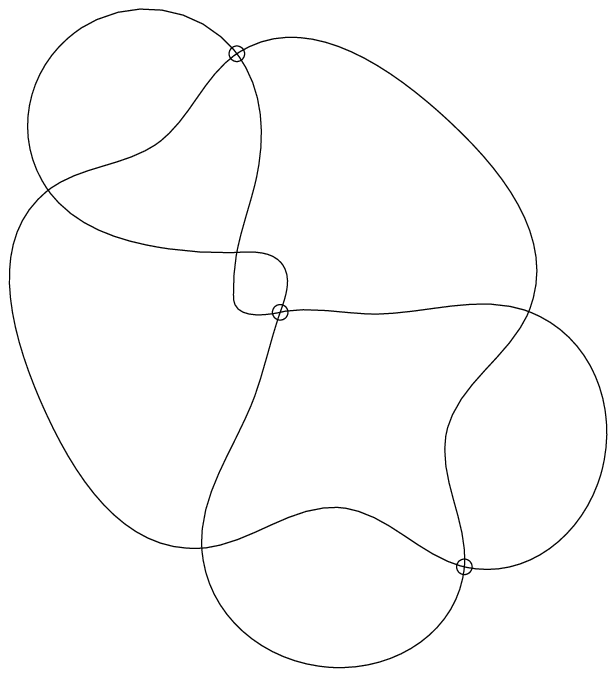, height=1.8cm}d4.16}
    \vspace*{8pt}
     \centerline{\epsfig{file=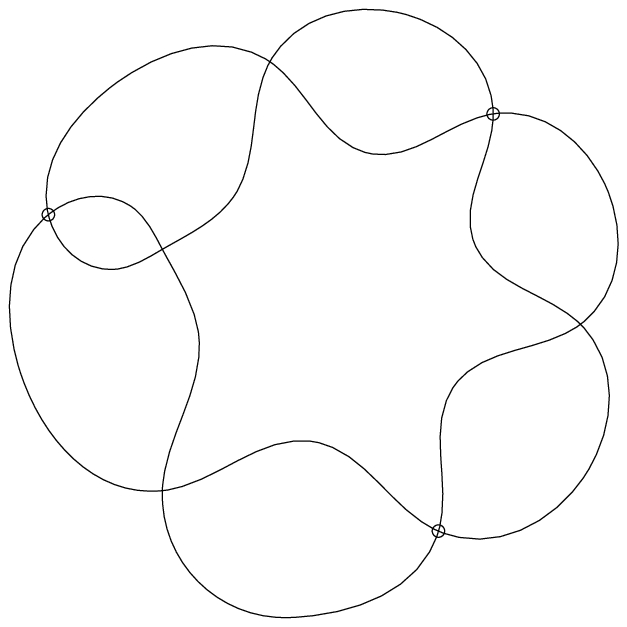, height=1.8cm}d4.17 \hfill \epsfig{file=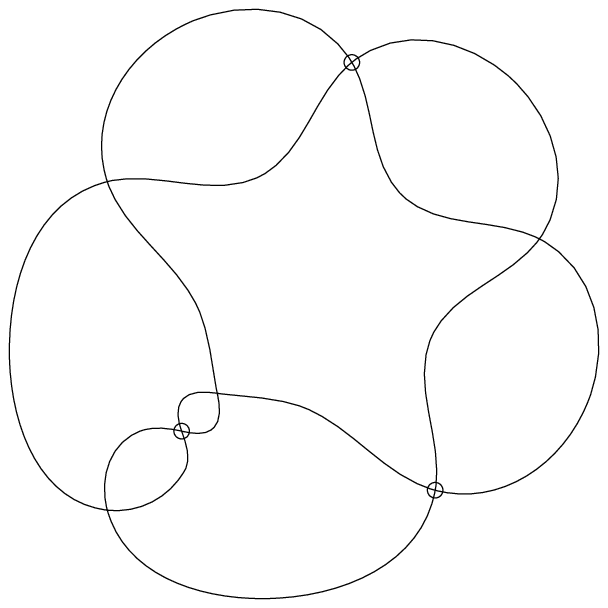, height=1.8cm}d4.18  \hfill \epsfig{file=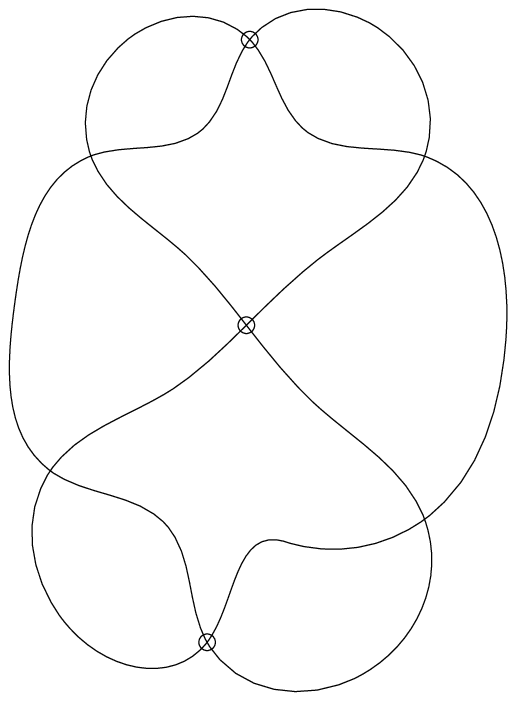, height=1.8cm}d4.19}
     \caption{Minimal Diagrams with $4$ Crossings}\label{figd4}
    \end{figure}

}\end{example}

\begin{remark}\label{Lebed} {\rm 
The diagrams $d4.18$ and $d4.19$ in the previous version, arXiv:1612.08473v1, are equivalent as unoriented virtual doodles. The authors would like to thank Victoria Lebed for pointing out this. 
}\end{remark}

\begin{example}{\rm 
See Figure~\ref{fig19minomalonecomponent}. It is a minimal unoriented virtual doodle diagram with $8$ crossings. 
Since the associated minimal doodle diagram is on the torus,  
by Corollary~\ref{cor:genus} the genus of the doodle is $1$.  Hence we cannot remove a virtual crossing.

    \begin{figure}[h]
    \centerline{\epsfig{file=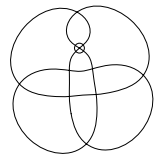, height=2.0cm}}
    \vspace*{8pt}
    \caption{A Minimal Virtual Doodle Diagram with One Component and $8$ Crossings}\label{fig19minomalonecomponent}
    \end{figure}

}\end{example}

It is easily seen that if $K$ has $m$ virtual crossings then 
the genus of the doodle $\Phi([K])$ associated to $K$ is  
equal to or less than $m$.   
Thus, the genus of the doodle is never greater than the minimum number of virtual crossings.  
It is quite easy to find examples where the inequality is strict. 

\begin{example}{\rm 
Let $K$ be the virtual doodle diagram depicted in Figure~\ref{fig20genus1doodle}. 
The doodle $\Phi([K])$ associated to $K$ has a minimal diagram on the torus and hence the genus of $\Phi([K])$ is $1$ by Corollary~\ref{cor:genus}.  
On the other hand, the minimum number of virtual crossings among all virtual doodle diagrams equivalent to $K$ is $2$, since the left circle must have at least one virtual crossing with each of the circles on the right.  
}\end{example} 

    \begin{figure}[h]
    \centerline{\epsfig{file=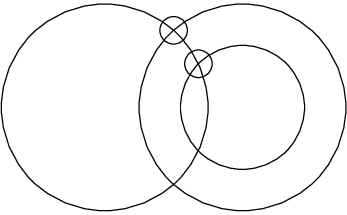, height=2.0cm}}
    \vspace*{8pt}
    \caption{A Genus $1$ Doodle with at Least $2$ Virtual Crossings}\label{fig20genus1doodle}
    \end{figure}

However it may be possible to amalgamate virtual crossings. If we take two consecutive virtual crossings on a virtual path then a surgery around the bases of the two handle representations means that the two handles can be replaced by one. If we apply this to the doodle in Figure~\ref{fig20genus1doodle}  we can represent it on a torus.

Indeed we can generalize this as follows. Define a {\bf virtual area} in a virtual diagram to be a square transverse to the diagram in which arcs enter in an edge and exit through the opposite edge with all the crossing points inside the square virtual. All these virtual crossings can be replaced by one handle. See Figure~\ref{fig21virtualarea}. 
 
    \begin{figure}[h]
    \centerline{\epsfig{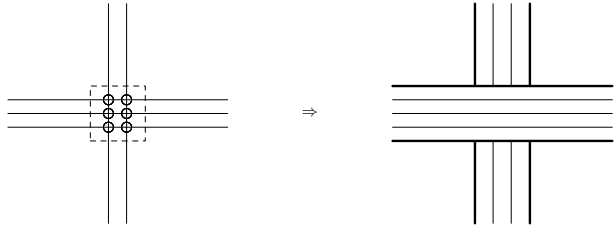}}
    \vspace*{8pt}
    \caption{Topological Interpretation of a Virtual Area}\label{fig21virtualarea}
    \end{figure}

Let $K$ be a  virtual doodle diagram.  A collection of virtual areas of $K$, say ${\cal A}= \{A_1, \ldots, A_k\}$,  
is a {\bf virtual area covering} of $K$ if they cover all virtual crossings and 
if they are mutually disjoint. 
Let ${\rm va} (K)$ denote the minimum cardinal number of all virtual area coverings of $K$.  When $K$ has no virtual crossings, we assume ${\rm va} (K)=0$.  We call ${\rm va} (K)$ the {\bf virtual area number} of $K$.  

For the virtual doodle $[K]$ represented by $K$, we define ${\rm va} ([K])$ by the minimum number among all ${\rm va} (K')$ such that $K'$ is equivalent to $K$.  We call it the {\bf virtual area number} of the  virtual doodle $[K]$.

Let $\Phi$ be the bijection from the family of  virtual doodles to the family of doodles defined in Section~\ref{sect:virtualsame}.

\begin{theorem}  Let $K$ be a  virtual doodle diagram and $D$ a doodle such that 
$\Phi ([K]) = [D]$.  Then the virtual area number ${\rm va} ([K])$ of the  virtual doodle $[K]$ equals the genus of the doodle $[D]$. 
\end{theorem}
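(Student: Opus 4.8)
The plan is to prove the equality $\mathrm{va}([K]) = g([D])$ by establishing the two inequalities separately, using the bijection $\Phi$ and Corollary~\ref{cor:genus} to translate between the virtual picture and the surface picture. First I would prove $\mathrm{va}([K]) \geq g([D])$. Pick any representative $K'$ of $[K]$ and any virtual area covering $\mathcal{A} = \{A_1,\dots,A_k\}$ of $K'$ with $k = \mathrm{va}(K')$. As explained in the discussion preceding the theorem (Figure~\ref{fig21virtualarea}), the topological interpretation $\varphi(K')$ of $K'$ can be built so that each virtual area $A_j$ contributes a single handle rather than one handle per virtual crossing: thicken the arcs and perform the surgery around the bases of the $A_j$'s. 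This yields a compact oriented surface $F'$ with a diagram $D_{F'}$ on it such that $(N(D,\Sigma),D) \simeq (F',D_{F'})$ after capping with discs, and $F'$ has at most $k$ handles, i.e. the resulting closed surface has genus $\leq k$. Since $(\Sigma,D)$ is a doodle diagram associated to $K'$, it represents $[D] = \Phi([K'])$, so $[D]$ has a representative on a surface of genus $\leq k = \mathrm{va}(K')$. Taking the minimum over $K'$ and invoking Corollary~\ref{cor:genus} gives $g([D]) \leq \mathrm{va}([K])$.

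Next I would prove $\mathrm{va}([K]) \leq g([D])$. Let $(\Sigma, D)$ be the minimal diagram of $[D]$, so $g(\Sigma) = g([D]) =: g$ by Corollary~\ref{cor:genus}; for simplicity assume $\Sigma$ is connected (the general case is handled componentwise, putting the $g$ handles on whichever components carry genus). The construction in the proof of Theorem~\ref{thm:bijection} produces a virtual doodle diagram $K_0$ associated to $(\Sigma, D)$ by choosing an embedding $\tilde g : N(D) \to \R^3$ and projecting. The key point is that the genus of $\Sigma$ is realised by $g$ disjoint handles, i.e. $\Sigma$ is obtained from a planar surface by attaching $g$ tubes, and each such tube, under the projection $\pi$, is recorded as a band running over the planar part that is encircled by a cluster of virtual crossings confined to a square transverse to $D$ — precisely a virtual area. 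Hence one can arrange the embedding $\tilde g$ so that all virtual crossings of $K_0$ lie in $g$ mutually disjoint virtual areas, giving a virtual area covering of size $g$; thus $\mathrm{va}(K_0) \leq g$. Since $\Phi([K_0]) = [D] = \Phi([K])$ and $\Phi$ is a bijection (Theorem~\ref{thm:bijection}), $K_0$ is equivalent to $K$, so $\mathrm{va}([K]) \leq \mathrm{va}(K_0) \leq g = g([D])$.

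Combining the two inequalities yields $\mathrm{va}([K]) = g([D])$.

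The main obstacle I expect is making rigorous the claim in the first paragraph that a collection of $k$ mutually disjoint virtual areas translates into a surface with exactly (at most) $k$ handles: one must check that the surgery "around the bases of the handle representations" of a whole virtual area — not just of two consecutive virtual crossings — is well defined and yields a single tube, and that disjointness of the virtual areas keeps the resulting tubes disjoint so the genus adds up correctly. This is essentially a careful local picture (Figure~\ref{fig21virtualarea}) promoted to a global statement about $\varphi(K')$ up to homeomorphism; once the local model is pinned down, gluing it into $\varphi$ and tracking Euler characteristic gives the genus bound. The second inequality is the more delicate direction to set up because it requires choosing the embedding $\tilde g$ in the proof of Theorem~\ref{thm:bijection} with foresight so that the handles of $\Sigma$ become virtual areas; but this is exactly the reverse of the local model, so once the first direction's local picture is established, the second follows by running it backwards.
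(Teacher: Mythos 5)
Your proposal is correct and follows essentially the same route as the paper: one inequality via the amalgamation of each virtual area into a single handle (Figure~\ref{fig21virtualarea}), and the other by realising a minimal-genus surface as a planar part with $g$ handles/tubes, isotoping the diagram so crossings lie off the handles, and projecting so that each handle-pair produces one virtual area. The only cosmetic difference is that the paper phrases the second direction via a standard handle decomposition $H^0 \cup H^1_1 \cup \dots \cup H^1_{2g} \cup H^2$ with the $1$-handles immersed in pairs, rather than your ``$g$ tubes'' picture, but these are the same construction.
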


\begin{proof}  First we show that $g([D]) \leq {\rm va} ([K])$.  Let $K_0$ be a  virtual doodle diagram with $[K_0]= [K]$ and ${\rm va}(K_0) = {\rm va} ([K])$.  Obviously, there is a 
doodle diagram $D_0$ on a closed surface $F_0$ 
with $[D_0] = \Phi([K_0])$ $(= \Phi([K]) = [D])$ 
and $g(F_0) = {\rm va}(K_0)$.  
Thus $g([D]) = g([D_0]) \leq g(F_0) = {\rm va}(K_0)= {\rm va}([K])$.  

We show that $g([D]) \geq {\rm va} ([K])$.  Let $D'$ be a doodle diagram on a closed surface $F'$ with $[D']=[D]$ and $g(F') = g([D])$. Let $g= g(F')$.  Consider a standard handle decomposition of $F'$, say $H^0 \cup  H^1_1 \cup \dots \cup H^1_{2g} \cup H^2$, where we assume that the attaching areas of the $1$-handles appear in the order 
$$ H^1_1, H^1_2, H^1_1, H^1_2, H^1_3, H^1_4, H^1_3, H^1_4, \dots$$ 
on the boudary of $H^0$. By moving $D'$ by an isotopy of $F'$, we may assume that (1) all crossings of $D'$ are in the interior of the $0$-handle $H^0$, (2)  for each $1$-handle $H^1_i$ the intersection of $D' \cap H^1_i$ is empty or 
some arcs that are parallel copies of the core of the $1$-handle, and (3) $D'$ is disjoint from the $2$-handle $H^2$.  Consider an immmersion of $H^0 \cup H^1_1 \cup \dots \cup H^1_{2g}$ to $\R^2$ such that the restriction of each handle is an embedding and the multiple point set consists of the transverse intersections of pairs of $1$-handles $H^1_{2i-1}$ and $H^1_{2i}$ for $i=1, \dots, g$. Then we have a  virtual doodle diagram, say $K'$, such that $\Phi([K'])=[D']$ and $K'$ has a virtual area covering whose cardinal number is less than or equal to $g$.  Thus, 
${\rm va}([K']) \leq g$.  Since $\Phi([K']) = [D'] = [D] = \Phi [K]$, we have $[K']=[K]$. 
Thus ${\rm va}([K])= {\rm va}([K']) \leq g= g(F') = g([D])$.  
\end{proof}

\begin{remark}{\rm 
Clearly a result, similar to the above result, can be proved for virtual links.
}\end{remark}

%

\end{document}